\date{May 28, 2015}
\newtheorem{dummy}{anything}[section]
\newtheorem{theorem}[dummy]{Theorem}
\newtheorem*{thma}{Theorem A}
\newtheorem*{thmb}{Theorem B}
\newtheorem*{thmc}{Corollary C}
\newtheorem{lemma}[dummy]{Lemma}
\newtheorem{corollary}[dummy]{Corollary}
\theoremstyle{definition}
\newtheorem{definition}[dummy]{Definition}
  \newtheorem{example}[dummy]{Example}
  \newtheorem{remark}[dummy]{Remark}
  \newtheorem*{acknowledgement}{Acknowledgement}
\newcommand
{\eqncount}{\setcounter{equation}{\value{dummy}}%
\addtocounter{dummy}{1}}
\newcommand{\cA}{\mathcal A}
\newcommand{\cB}{\mathcal B}
\newcommand{\cE}{\mathcal E}
\newcommand{\cF}{\mathcal F}
\newcommand{\cG}{\mathcal G}
\newcommand{\cL}{\mathcal L}
\newcommand{\cM}{\mathcal M}
\newcommand{\bZ}{\mathbb Z}
\newcommand{\bQ}{\mathbb Q}
\newcommand{\bC}{\mathbb C}
\newcommand{\bR}{\mathbb R}
\newcommand{\vv}{\, | \,}
\newcommand{\tensor}{\otimes}
\DeclareMathOperator{\Fix}{Fix}
\DeclareMathOperator{\Sign}{sign}
\DeclareMathOperator{\sign}{sign}
\DeclareMathOperator{\ad}{ad}
\DeclareMathOperator{\Ker}{Ker}
\DeclareMathOperator{\Coker}{Coker}
\newcommand{\bd}{\partial}
\newcommand{\CP}{\bC P}
\newcommand{\bCP}{\overline{\CP}}
\newcommand\cy[1]{\bZ/{#1}}
\DeclareMathOperator{\Sharp}{\#_1^n}
\newcommand\ra{\rangle}
\newcommand\la{\langle}
\newcommand{\bMX}{\overline{\cM(X)}}
\DeclareMathOperator{\lk}{lk}
\newcommand{\xa}{\hphantom{-}1}
\newcommand{\xb}{\hphantom{-}0}
\newcommand{\xc}{\hphantom{-}\vdots}
\begin{document}

\title{Cyclic Group Actions on Contractible $4$-Manifolds}

\author{Nima Anvari and Ian Hambleton}
\address{Department of Mathematics \& Statistics
 \newline\indent
McMaster University
 \newline\indent
Hamilton, ON  L8S 4K1, Canada}
\email{hambleton{@}mcmaster.ca}
\address{Department of Mathematics \& Statistics
 \newline\indent
McMaster University
 \newline\indent
Hamilton, ON  L8S 4K1, Canada}
\email{anvarin@math.mcmaster.ca}
\thanks{Research partially supported by NSERC Discovery Grant A4000.}
\begin{abstract}
There are known infinite families of Brieskorn homology $3$-spheres which can be realized as boundaries of smooth contractible $4$-manifolds. 
In this paper we show that smooth free periodic actions on these Brieskorn spheres do not extend smoothly over a contractible $4$-manifold.
We  give a new infinite family of examples in which the actions extend locally linearly but not smoothly. 
\end{abstract}

\maketitle
 

\section{Introduction}
\label{sect:intro}
The Brieskorn homology spheres $\Sigma(a,b,c)$ provide important examples of
Seifert fibered $3$-manifolds \cite{Orlik}, and have been extensively studied as test cases for questions about smooth $4$-manifolds and gauge theory invariants (see Anvari \cite{Anvari:2014},  Lawson \cite{Lawson87}, Fintushel and Stern \cite{FS90, FS91}, Saveliev \cite{Saveliev:2002}). In this paper we answer a well-known question  (asked by Allan Edmonds at Oberwolfach in 1988) about extending smooth free cyclic group actions on $\Sigma(a,b,c)$ to certain smooth $4$-manifolds which they bound.

Kwasik and Lawson \cite{KL93} found an infinite family of Brieskorn homology $3$-spheres
which admit free $\bZ/p$-actions and bound smooth contractible $4$-manifolds $W$, such that the actions extend locally linearly with one fixed point in $W$,  but no such extended action exists smoothly. Their examples come from the list of Casson and Harer \cite{CH81} of Brieskorn homology $3$-spheres which bound smooth contractible $4$-manifolds:
\begin{align*}
&\Sigma(r,rs-1,rs+1) \quad \text{$r$ even, $s$ odd} \\
&\Sigma(r,rs \pm 1,rs \pm 2) \quad \text{$r$ odd, $s$ arbitrary}.
\end{align*}
Necessary and sufficient conditions for a locally linear extension of  a free action on an integral homology three sphere to its bounding contractible $4$-manifold are contained in the work of Edmonds \cite{E87}. To show non-smoothability, Kawsik and Lawson apply the gauge theoretic results of Fintushel and Stern \cite{FS85} in the orbifold setting. 

In this paper we demonstrate a new technique to detect non-smoothability of these actions and apply it to obtain a complete answer:

\begin{thma} 
A free cyclic group 
action on a Brieskorn homology $3$-sphere $\Sigma(a,b,c)$ does not extend to  a smooth action on any contractible smooth $4$-manifold $W$ that it bounds.
\end{thma}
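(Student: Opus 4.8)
We argue by contradiction, converting a hypothetical extension into a gauge-theoretic statement about the quotient $4$-orbifolds, along the lines of Fintushel--Stern's pseudofree orbifold instanton theory \cite{FS85}, but extracting an obstruction sharp enough to rule out \emph{every} contractible filling. First reduce to $G=\bZ/p$ with $p$ prime: a free $\bZ/n$-action contains a free $\bZ/p$-subaction for each prime $p\mid n$, and a smooth extension of the former restricts to one of the latter. So suppose $\bZ/p$ acts freely on $Y=\Sigma(a,b,c)$, that $Y=\partial W$ with $W$ smooth and contractible, and that the action extends smoothly over $W$. Since a smooth extension is in particular locally linear, the analysis of such extensions due to Edmonds \cite{E87}, together with P.\,A.~Smith theory, shows that the fixed set is a single interior point $x_0$, with local rotation numbers $(q_1,q_2)$ coprime to $p$. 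Passing to quotients, $\hat W:=W/(\bZ/p)$ is a smooth $4$-orbifold with one cone point of type $(p;q_1,q_2)$ and boundary $\hat Y:=Y/(\bZ/p)$, which is a Seifert fibered rational homology sphere with $H_1(\hat Y;\bZ)\cong\bZ/p$; since $W$ is contractible, $\hat W$ has the rational homology of a point and trivial rational intersection form.

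Next, introduce the standard negative-definite filling. The Brieskorn sphere $Y$ bounds the canonical negative-definite $4$-orbifold $P=P(a,b,c)$ coming from the resolution of the weighted-homogeneous singularity $z_1^a+z_2^b+z_3^c$, and the free $\bZ/p$-action on $Y$ is the restriction of a cyclic subgroup of the weighted $\bC^{*}$-action, so it extends over $P$ as a \emph{pseudofree} action with isolated fixed points on the exceptional set. Taking quotients yields a negative-definite $4$-orbifold $\hat P$ with finitely many cone points, of types computable from the Seifert data of $Y$ and from $p$, and with $\partial\hat P=\hat Y$. Gluing along $\hat Y$, which is a rational homology sphere, produces a closed negative-definite $4$-orbifold
\[
Z \;=\; \hat P \cup_{\hat Y} (-\hat W),
\]
whose singular set is the cone points of $\hat P$ together with $x_0$, and whose rational intersection form is (minus) that of $\hat P$, the piece $\hat W$ contributing nothing.

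Finally, run instanton theory for a suitable orbifold $SO(3)$-bundle over $Z$. One chooses a small bundle so that the moduli space of anti-self-dual connections has expected dimension one; this dimension is an index term over $Z$ plus local $\rho$-invariant (Fintushel--Stern $R$-invariant) contributions at the cone points. The standard but delicate orbifold package --- compactness of the resulting $1$-manifold (no energy concentration below the minimal instanton energy), control of reducible connections (using negative-definiteness and the homological triviality inherited from the contractibility of $W$), and an orientation --- gives a signed or mod-$2$ count of its endpoints, hence a numerical identity relating the $R$-invariants of the cone points of $\hat P$ to the single invariant $R(p;q_1,q_2)$ of $\hat W$. The first set of contributions is a fixed Dedekind-sum expression in $(a,b,c)$ and $p$, while contractibility of $W$ rigidly constrains the term from $\hat W$; the crux, and the main obstacle, is to show that these cannot balance for \emph{every} Brieskorn sphere and \emph{every} free cyclic action, not merely for the Casson--Harer list \cite{CH81} treated by Kwasik and Lawson \cite{KL93} --- this requires controlling the arithmetic of Dedekind sums and the parity of the relevant indices uniformly, and the ``new technique'' of the paper is precisely the identification of an invariant for which this can be done in full generality. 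A contradiction there establishes Theorem~A.
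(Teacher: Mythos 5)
Your reduction to prime order and the setup (one fixed point, quotient orbifold $\hat W$ with a single cone point, gluing to the negative definite orbifold $\hat P$ along $\hat Y$) are fine, but the proof has a genuine gap exactly where the theorem lives: the final step, ``a signed count of endpoints gives a numerical identity relating the $R$-invariant contributions of $\hat P$ to that of $\hat W$, and these cannot balance for every Brieskorn sphere and every free action,'' is asserted rather than proved. You do not specify the orbifold $SO(3)$-bundle, verify that its index is one, control the reducibles over $Z$, or --- most importantly --- carry out the arithmetic showing the contributions never cancel. This is not a routine verification: it is precisely the Fintushel--Stern/Kwasik--Lawson strategy \cite{FS85, KL93}, and it is known to be insufficient in general. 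Indeed, when $\Sigma(a,b,c)$ bounds a contractible (even acyclic) $4$-manifold, the Fintushel--Stern invariant satisfies $R(a,b,c)=-1$ by \cite{NZ85}, so the pseudofree-orbifold count loses exactly the positivity that makes it an obstruction; this is why Kwasik and Lawson could only treat part of the Casson--Harer list \cite{CH81} and why the present paper develops a different technique. Deferring the crux to ``the new technique of the paper'' means the proposal does not contain a proof of Theorem~A.

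For comparison, the paper never passes to quotient orbifolds. It extends the free action \emph{upstairs} over the canonical negative definite resolution $M(\Gamma)$ by equivariant plumbing, glues the hypothetical smooth extension to form $X=M(\Gamma)\cup_{\Sigma}(-W)\simeq \Sharp\bCP^2$ with a smooth homologically trivial $\bZ/p$-action, and uses the \emph{equivariant} instanton moduli space of \cite{HL92, HL95}: connectedness and orientability of the fixed set force every standardly oriented fixed $2$-sphere to have coefficients in $\{0,1\}$, and every invariant $2$-sphere to have all coefficients of one sign, in a standard diagonal basis. The fact that $R=-1$ is then used \emph{positively}: it forces the central node of $\Gamma$ to have weight $-1$, so $X$ contains a fixed $(-1)$-sphere $F_1$ and two adjacent invariant spheres $F_2,F_3$ with $[F_2]\cdot[F_1]=[F_3]\cdot[F_1]=1$ and $[F_2]\cdot[F_3]=0$; writing $[F_2]=-e_1+\sum_{i\ge2}a_ie_i$, $[F_3]=-e_1+\sum_{i\ge2}b_ie_i$ with all $a_ib_i\ge0$ gives $0=[F_2]\cdot[F_3]=-1-\sum a_ib_i$, a contradiction. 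So the sign constraints on equivariant homology classes, not an orbifold instanton count, carry the argument.
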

\begin{remark} By P.~A.~Smith theory,
any smooth or locally linear extension of a free cyclic action  
on 
$\Sigma(a,b,c)$ to a contractible manifold $W$ must have exactly one fixed point. 
\end{remark}

Recall that the Brieskorn homology spheres for $a,b,c$ pairwise relatively prime can be realized as the link of a complex surface singularity:

\begin{equation*}
\Sigma(a,b,c)=\{ (x,y,z) \in \bC^3 \vv x^a+y^b+z^c=0 \} \cap\, S^5
\end{equation*}
with its induced orientation. As a Seifert fibered homology sphere it admits a smooth fixed-point free circle action with three orbits of finite isotropy (see \cite{Orlik}). 

The action of $\pi = \bZ/p \subset S^1$ contained in the circle action will be free if and only if $p$ is relatively prime to $a,b,c$. This action is referred to as the \emph{standard $\pi$-action} on $\Sigma(a,b,c)$. Luft and Sjerve
\cite[Prop.~4.3]{LS92} showed that any smooth free cyclic group action on $\Sigma(a,b,c)$ is conjugate to a standard action.




We give new infinite family of examples admitting  locally linear extensions to a contractible $W$. The examples are contained in the second  of the infinite families found by Stern \cite{S78}:
\begin{align*}
&\Sigma(r,rs \pm 1,2r(rs \pm 1)+rs-(\pm 1)) \quad \text{$r$ even, $s$ odd} \\
&\Sigma(r,rs \pm 1,2r(rs \pm 1)+rs \pm 2) \quad \text{$r$ odd, $s$ arbitrary}\\
&\Sigma(r,rs \pm 2,2r(rs \pm 2)+rs \pm 1) \quad \text{$r$ odd, $s$ arbitrary}.
\end{align*}
where we take $s= kp$ for any positive integer $k$.

\begin{thmb}
Let $r$ be odd, and let $p$ be an integer relatively prime to $2r(r+1)$. Then for each positive integer $k$, the standard free action of $\pi=\bZ/p$ on $\Sigma(r,rkp\pm 1,2r(rkp \pm 1)+rkp \pm 2)$ extends to a locally linear $\pi$-action on a smooth contractible $4$-manifold $W$ that it bounds.
\end{thmb}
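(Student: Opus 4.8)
The plan is to verify the necessary and sufficient conditions of Edmonds \cite{E87} for a locally linear equivariant extension. Writing $a=r$, $b=rkp\pm 1$ and $c=2r(rkp\pm1)+rkp\pm2$, one checks the identity $c=(2r+1)b\pm1$, so that modulo $p$ we have $b\equiv\pm1$ and $c\equiv\pm2(r+1)$; hence $\gcd(p,abc)=\gcd(p,2r(r+1))$, and the hypothesis is precisely the condition that the standard $\pi$-action on $\Sigma=\Sigma(a,b,c)$ be free. By Stern \cite{S78}, $\Sigma$ bounds a smooth contractible $4$-manifold, and by the Remark following Theorem~A any extension to such a manifold has exactly one fixed point; so it remains to produce, on some contractible $W$ with $\partial W=\Sigma$, a locally linear $\pi$-action with one fixed point restricting to the standard action on $\partial W$.

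First I would analyze the orbit space. Since $\pi\subset S^1$ lies inside the Seifert circle action and $\gcd(p,abc)=1$, the quotient $N=\Sigma/\pi$ is a Seifert fibered $\bZ/p$-homology sphere over the base orbifold $S^2(a,b,c)$, and its Seifert invariants, the linking form on $H_1(N)\cong\bZ/p$, and its Reidemeister torsion are all computed directly from those of $\Sigma$. Moreover the $S^1$-action extends over the canonical negative-definite star-shaped plumbing $P$ with $\partial P=N$, so the Atiyah--Singer $G$-signature theorem applied to $P$ computes the $\rho$-invariants $\rho(N,g)$, $g\in\pi\smallsetminus\{1\}$, as well as the oriented bordism class of the map $N\to B\pi$.

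Next I would set up the surgery problem. Removing an invariant ball around the prospective fixed point, whose local representation is $\bC_{q_1}\oplus\bC_{q_2}$, and passing to orbit spaces, a locally linear extension on a contractible $W$ is equivalent to a compact $4$-manifold $Y$ with $\pi_1 Y=\pi$ and $\partial Y=N\sqcup L(p;q_1,q_2)$ whose $p$-fold cyclic cover is a simply connected homology cobordism from $\Sigma$ to $S^3$; capping the $S^3$ end with $D^4$ then returns $W$ with its action. To build $Y$ I would (i) choose the weights $(q_1,q_2)$ so that the linking form, Reidemeister torsion and bordism class of $L(p;q_1,q_2)$ match those of $N$, expecting a representation of the form $\bC_1\oplus\bC_q$, with $q$ read off from the linking form of $N$, to suffice; and (ii) starting from a bordism that realizes this matching, do surgery below and in the middle dimension --- legitimate topologically since $\pi$ is finite --- to make the $\pi$-cover a homology cobordism. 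The remaining obstruction lies in $L_4(\bZ[\pi])$, which for odd $p$ is detected by multisignatures, and these vanish exactly when $\rho(N,g)=\rho(L(p;q_1,q_2),g)$ for all $g$.

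The main obstacle is thus the $\rho$-invariant identity in step (ii): one must evaluate $\rho(N,g)$ for $N=\Sigma(a,b,c)/\pi$ --- most cleanly via the plumbing $P$ and the $G$-signature theorem, or via the $S^1$-equivariant description and Dedekind reciprocity --- and recognize the answer as the $\rho$-invariant of a linear lens space. This is where the special shape $c=(2r+1)b\pm1$ and the coprimality $\gcd(p,2r(r+1))=1$ are genuinely used; I expect the resulting cotangent-sum identity to follow from standard reciprocity after substituting $b\equiv\pm1$ and $c\equiv\pm2(r+1)\pmod p$. Once $Y$ is in hand, that its $p$-fold cover is simply connected and caps off to a contractible manifold is a routine application of van Kampen's theorem and Poincar\'e--Lefschetz duality, and the resulting action on $W$ is locally linear by construction --- linear near the fixed point and free elsewhere --- and by Theorem~A cannot be smoothed.
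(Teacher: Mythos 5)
Your overall framework matches the paper's: reduce to the Edmonds/Kwasik--Lawson criterion that $Q=\Sigma/\pi$ be $\bZ[\pi]$ $h$-cobordant to a classical lens space, identify the candidate lens space from the residues $a\equiv r$, $b\equiv\pm1$, $c\equiv\pm2(r+1)\pmod p$ (the paper's $L(p;r,2r+2)$), and then match Reidemeister torsion and $\rho$-invariants. Re-deriving the criterion by a surgery argument instead of citing \cite{E87} and \cite{KL93} is acceptable in outline (note $p$ is automatically odd here, and the torsion condition $\Delta(Q)\sim u^2\Delta(L)$ is exactly what the Kwasik--Lawson simple homology equivalence criterion handles; your congruences do put you in that case, though you do not carry this out).

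The genuine gap is the $\rho$-invariant identity, which you explicitly leave as an expectation (``I expect the resulting cotangent-sum identity to follow from standard reciprocity''). That identity is the heart of the theorem and it does not reduce to substituting residues mod $p$: by the $G$-signature theorem applied to any equivariant bounding $4$-manifold, $\eta_t(\Sigma)$ is a sum of fixed-point defects minus the signature, so the formula one gets from the canonical negative-definite star-shaped plumbing $P$ involves many rotation numbers determined by the continued-fraction data and a large signature term, and showing that this cotangent sum equals $\eta_t(L(p;r,2r+2))$ is essentially the whole difficulty. The paper sidesteps this with a specific device absent from your plan: $\Sigma(r,rs\pm1,2r(rs\pm1)+rs\pm2)$ bounds Fickle's \emph{indefinite} plumbed $4$-manifold $X_0$ of signature $-2$ (Lemma \ref{lem:signature}), and equivariant plumbing on that graph yields rotation numbers occurring in cancelling pairs plus one fixed $2$-sphere, so the $G$-signature defect collapses term by term to the eta invariant of $L(p;r,2r+2)$, simultaneously producing the fixed-point rotation data $(r,2r+2)$ (not of the form $(1,q)$ as you guessed, though of course these lens spaces may be re-coordinatized). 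Without this computation, or some verified substitute for it, your argument is incomplete at its decisive step.
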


We point out the following application to equivariant embeddings of $\Sigma(a,b,c)$ in smooth homotopy $4$-spheres with semifree $\pi$-actions.

\begin{thmc}\label{cor:sphere}
There are infinite families of Brieskorn homology spheres $\Sigma(a,b,c)$ 
such that the standard free actions of $\pi=\bZ/p$Œ
 embed  equivariantly in homotopy $4$-spheres with locally linear $\pi$-actions. No such smooth  equivariant embeddings of $\Sigma(a,b,c)$ exist into any smooth $\pi$-action on a homotopy $4$-sphere. 
\end{thmc}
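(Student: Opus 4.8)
The plan is to derive Corollary~\ref{cor:sphere} from Theorems~A and~B by a doubling construction together with an elementary separation argument.

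\emph{Existence of locally linear equivariant embeddings.} Let $\Sigma=\Sigma(a,b,c)$ be one of the Brieskorn spheres from Theorem~B, so that the standard free $\pi$-action extends to a locally linear $\pi$-action on a smooth contractible $4$-manifold $W$ with $\partial W=\Sigma$. Set $X=W\cup_\Sigma(-W)$. A Mayer--Vietoris computation gives $\widetilde H_*(X;\bZ)=\widetilde H_*(S^4;\bZ)$, and van Kampen gives $\pi_1(X)=\pi_1(W)*_{\pi_1(\Sigma)}\pi_1(-W)=1$, so $X$ is a homotopy $4$-sphere. Gluing the two copies of the extended action along the standard action on $\Sigma$ produces a locally linear $\pi$-action on $X$; by the Remark following Theorem~A the extended action on each copy of $W$ has exactly one fixed point, so this action has exactly two. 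The equatorial copy of $\Sigma$ is invariant, and the inclusion $\Sigma\hookrightarrow X$ is equivariant for the standard free action. Carrying this out over the infinite families of Theorem~B (and, via the Casson--Harer list, over the Kwasik--Lawson examples) proves the first assertion.

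\emph{Non-existence of smooth equivariant embeddings.} Suppose, for a contradiction, that some such $\Sigma=\Sigma(a,b,c)$ embeds smoothly and equivariantly in a homotopy $4$-sphere $X'$ carrying a smooth $\pi$-action that restricts to the standard free action on $\Sigma$. Since $\Sigma$ is null-homologous it separates $X'$ into compact codimension-zero submanifolds $W_1,W_2$ meeting along $\partial W_1=\partial W_2=\Sigma$, and Mayer--Vietoris shows each $W_i$ is a $\bZ$-homology $4$-ball. Replacing $\pi$ by its index-two subgroup if the action interchanges the two sides, we may assume it preserves each $W_i$; it then restricts to a smooth $\pi$-action on $W_i$ extending the standard free action on $\partial W_i=\Sigma$, and by P.~A.~Smith theory this action has a single fixed point in the interior of $W_i$. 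If each $W_i$ is contractible, then it is a smooth contractible $4$-manifold bounded by $\Sigma(a,b,c)$ carrying a smooth extension of the standard free $\pi$-action, contradicting Theorem~A; this proves the second assertion.

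\emph{The main obstacle.} The delicate point is to show the complementary regions $W_i$ are contractible. Van Kampen only yields that $\pi_1(W_i)$ is perfect and normally generated by the image of $\pi_1(\Sigma)$, which need not vanish a priori. The route I would take is to observe that the non-smoothability mechanism behind Theorem~A should use only that $W$ is a $\bZ$-homology $4$-ball with a single isolated fixed point: one caps $W_i/\pi$ off with the quotient of the canonical negative-definite plumbed $4$-manifold bounded by $\Sigma(a,b,c)$, obtaining a closed negative-definite orbifold with one singular point, and derives the contradiction from the $\pi$-equivariant gauge theory on that orbifold exactly as in Theorem~A. Alternatively, one might attempt a direct argument that a homology $3$-sphere with infinite fundamental group, smoothly embedded in a \emph{homotopy} $4$-sphere, has simply connected complementary regions. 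Either way, once the pieces $W_i$ are brought under the hypotheses of Theorem~A (or its effective hypotheses), the corollary follows; the locally linear half is purely formal.
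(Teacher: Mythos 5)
Your construction is exactly the paper's: double $W$ along $\Sigma$ to get the locally linear actions on a homotopy $4$-sphere with two fixed points, and for non-existence cut the homotopy sphere along the smoothly, equivariantly embedded $\Sigma$ and apply Theorem A to one of the two complementary pieces. The point you flag as ``the main obstacle,'' however, is not an obstacle at all: the form of Theorem A actually proved in the body of the paper (Theorem \ref{thm:A}) is stated for a smooth \emph{acyclic} $4$-manifold $W$ such that $\pi_1(W)$ is the normal closure of the image of $\pi_1(\Sigma)$ --- contractibility is never used, since all that is needed is that $X=M(\Gamma)\cup_\Sigma(-W)$ be closed, simply connected and negative definite, and that follows from acyclicity plus the normal-generation condition via van Kampen. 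You have already verified both hypotheses for your pieces $W_i$ (Mayer--Vietoris gives acyclicity, and van Kampen applied to the simply connected homotopy sphere gives that $\pi_1(W_i)$ is normally generated by the image of $\pi_1(\Sigma)$), so the contradiction is immediate; there is no need to redo the gauge theory in an orbifold setting or to prove simple connectivity of the complementary regions. Your first guess --- that the non-smoothability mechanism only needs a homology ball with the $\pi_1$ condition --- is precisely what the paper records as Theorem \ref{thm:A}.

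One small inaccuracy: the clause ``replacing $\pi$ by its index-two subgroup if the action interchanges the two sides'' does not work as stated. For the families of Theorem B the prime $p$ is odd (it is coprime to $2r(r+1)$), so $\bZ/p$ admits no surjection onto $\bZ/2$ and the action automatically preserves each side; and if $p$ were $2$, the index-two subgroup is trivial and your argument would yield nothing. It is cleaner simply to observe that for odd $p$ the two components of $X'\setminus\Sigma$ cannot be interchanged, which is implicitly what the paper does.
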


Here is brief outline of the paper. The links of complex surface singularities that are integral homology three spheres are \emph{plumbed} homology spheres; that is, they can be realized as the boundaries of smooth $4$-manifolds obtained by plumbing disk bundles over $2$-spheres with an intersection matrix that is negative definite. Among these is the canonical negative definite resolution in that it admits no $(-1)$-blowdowns. To prove non-smoothability of locally linear extensions,  we extend the free action on a Brieskorn homology sphere
$\Sigma = \Sigma(a,b,c)$  to its canonical negative definite resolution 
$M(\Gamma)$
 by equivariant plumbing on the resolution graph. From this we form the closed, simply connected $4$-manifold 
 $$X=M(\Gamma) \cup_{\Sigma} (-W)$$
  which by Donaldson's Theorem A \cite{DO86} has intersection matrix that is diagonalizable over the integers. If the action on $W$  is smoothable,  then $X$ admits a smooth $\bZ/p$-action which equivariantly splits along a free action on $\Sigma(a,b,c)$. The idea is that the global orientation of the moduli space prevents the configuration of invariant and fixed $2$-spheres in $M(\Gamma)$ obtained from plumbing to embed equivariantly and smoothly in a connected sum of linear actions on complex projective spaces. We use equivariant Yang-Mills moduli spaces as developed in Hambleton-Lee \cite{HL92, HL95}. 

In the next section we collect results from equivariant gauge theory that we will need for the proof of Theorem A, and in Section \ref{sect: locally linear extensions} we prove that locally linear extensions exist for the infinite family in Theorem B.  We work out explicit examples for the infinite family $\Sigma(3,3s+1,6(3s+1)+3s+2)$. 

\begin{acknowledgement} The authors would like to thank the referee for many helpful comments and suggestions.
\end{acknowledgement}
\section{Equivariant Moduli Spaces}
\label{sect: equivariant moduli spaces}

Let $(\Sigma, \pi)$ denote a Brieskorn integral homology $3$-sphere $\Sigma=\Sigma(a,b,c)$,  together with a free action of $\pi=\bZ/p$ contained in the natural circle action of the Seifert fibration,  and suppose this action extends smoothly to a contractible $4$-manifold $W$. 

\medskip
\noindent
\textbf{Conventions}. In this section, the notation $\pi$ denotes a finite cyclic group of \emph{prime} order. We also write $\pi=\cy p$ to specify the order. All $\pi$-actions are smooth and orientation-preserving.

\medskip
Now consider $(M(\Gamma),\pi)$ to be the canonical negative definite resolution of $\Sigma(a,b,c)$ together with the smooth free $\pi$-action extending via equivariant plumbing on the graph $\Gamma$. Then $X=M(\Gamma) \cup_{\Sigma} (-W)$ denotes a simply connected, smooth negative definite $4$-manifold together with a homologically trivial $\bZ/p$-action. As mentioned in the introduction, our strategy will be to study the equivariant instanton moduli spaces to obtain a contradiction to the action of $\pi$ extending smoothly to $W$. We begin this section by collecting results about equivariant Yang-Mills moduli spaces needed to prove non-smoothability of the extension (see \cite{DK90} and \cite{HL92}).

\medskip
Let $P\rightarrow X$ denote a principal $SU(2)$-bundle over a closed, smooth and simply connected $4$-manifold $X$ whose intersection form is odd and negative definite. By results of Donaldson and Wall, it follows that $X$ is homotopy equivalent to a connected sum of copies of $ \bCP^2$ (see \cite{DK90}).  Suppose that  $\pi=\bZ/p$  acts smoothly on $X$ inducing the identity on homology. We fix a real analytic structure on $X$ compatible with the group action and a real analytic $\pi$-invariant metric, so the action is given by real analytic isometries. 

Let $\cA$ denote the space of $SU(2)$ connections and $\cB=\cA/\cG$ the space of connections modulo the gauge group $\cG$. 
Since $SU(2)$-bundles are classified by the second Chern class $c_2(P) \in H^4(X;\bZ)$ and since the $\pi$-action on $X$ preserves the orientation, there are lifts of the isometries $g\colon X \rightarrow X$ that are  generalized bundle maps $\hat g\colon P\rightarrow P$. Let $\cG(\pi)$ denote the group of all lifts, then there is an action of $\cG(\pi)$ on the space of connections $\cA$ which is well-defined modulo gauge and there is a short exact sequence 
\eqncount
\begin{equation}
1 \rightarrow \cG \rightarrow \cG(\pi) \rightarrow \pi \rightarrow 1
\end{equation}
we then get a well-defined $\pi$-action on $\cB$. The metric induces a decomposition of $2$-forms $\Omega^2(\ad P)=\Omega^2_+(\ad P) \oplus \Omega^2_-(\ad P)$. We are interested in the ``charge one'' bundle, with $c_2(P)=1$,  and the Yang-Mills moduli space defined by connections modulo gauge with anti-self-dual (ASD) curvature:
\eqncount\begin{equation}
\cM(X)=\{ [A] \in \cB(P) \vv F_A^+=0 \}
\end{equation} 
Since the curvature is gauge invariant there is a natural $\pi$-action on $\cM(X)$. The stabilizer $\cG_A(\pi)$ has compact isotropy subgroups; when $A$ is irreducible $\Gamma_A=\{\pm 1 \}$ and when $[A]$ is reducible, $\Gamma_A=U(1)$ and the associated complex vector bundle $E\rightarrow X$ splits as $L\oplus L^{-1}$ for a complex line bundle $L$ over $X$. The stabilizer $\cG_A(\pi)$ is an extension in the short exact sequence
\eqncount\begin{equation}
1\rightarrow \Gamma_A \rightarrow \cG_A(\pi)\rightarrow \pi_A \rightarrow 1
\end{equation}
where $\pi_A$ denotes the stabilizer of $[A] \in \cM(X)$. 
The local finite-dimensional model of the moduli space is given by a $\cG_A(\pi)$-equivariant Kuranishi map
\eqncount\begin{equation}
\phi_A\colon H^1_A \rightarrow H^2_A
\end{equation} 
where $H^1_A$ and $H^2_A$ are the cohomology group of the $\cG_A(\pi)$-equivariant fundamental elliptic complex 
\eqncount\begin{equation}
0 \rightarrow \Omega^0(X;\ad P) \xrightarrow{d_A} \Omega^1(X;\ad P) \xrightarrow{d^+_A} \Omega^2_+(X;\ad P)\rightarrow 0
\end{equation}
where 
$$D_A^+=d^{\ast}_A+d^+_A\colon  \Omega^1(X;\ad P) \to  \Omega^0(X;\ad P) \oplus \Omega^2_+(X;\ad P)$$
  is the linearization of the ASD equation. The  \emph{formal dimension}  of this moduli space $\cM(X)$ is given by the fomula
  \eqncount
\begin{equation} \dim H^1_A - \dim H^0_A - \dim H^2_A = 5
\end{equation}
computed at any ASD connection. 
When $H^2_A=0$, the origin is a regular value for $\phi_A$ and the infinitesimal deformations in $H^1_A$ can be integrated,  so that a neighborhood of such an irreducible  ASD connection $[A]$ in the equivariant moduli space $(\cM(X),\pi)$ is locally isomorphic to $(\phi^{-1}(0)/\Gamma_A,\pi)$ and gives $5$-dimensional manifold charts on the moduli space. 

However, in this equivariant setting it is known that there are obstructions to equivariant transversaliy:  for example,  the virtual representation $[H_A^1]-[H_A^2] \in RO(\pi)$ must be an actual representation. Moreover it may not be possible to make an equivariant perturbation of the ASD equations to get $H^2_A=0$.  Hambleton and Lee in \cite{HL92} used the notion of equivariant general position as developed by Bierstone \cite{Bie77} and applied it to the setting of Yang-Mills moduli spaces. The idea is to make generic equivariant perturbations chart by chart giving the moduli space the structure of a equivariant stratified space. Here we list the main properties of the instanton moduli space in our setting when $X$ is negative definite.

\begin{enumerate}
\setlength{\itemsep}{7pt}
\item The equivariant moduli space $(\cM(X),\pi)$ is a Whitney stratified space which inherits an effective $\pi$-action and has open manifold strata parametrized by isotropy subgroups $\cM^{\ast}_{(\pi^{\prime})}=\{ [A] \in \cM^{\ast}(X) \vv \text{$A$ has isotropy subgroup $\pi_A=\pi^{\prime}$} \}$. 

\item An irreducible connection $[A]\in \Fix(\cM^\ast(X), \pi)$ corresponds to an equivariant lift of the $\pi$-action on $X$ to a $\cG_A(\pi)$-bundle structure on $P$, and
 the connected components of the fixed set 
  in the moduli space 
 correspond to distinct equivalence classes of lifts \cite{BM93}. In this case, $\cG_A(\pi)$ is a (possibly non-split) extension of $\pi$ by $\{\pm 1\}$.
 Moreover, the dimension of the fixed set can be computed from the $\pi$-fixed set of the fundamental elliptic complex:
\begin{equation*}
0 \rightarrow \Omega^0(X;\ad P)^{\pi} \xrightarrow{d_A} \Omega^1(X;\ad P)^{\pi} \xrightarrow{d^+_A} \Omega^2_+(X;\ad P)^{\pi}\rightarrow 0
\end{equation*}
for a connection $[A]$ in $\Fix(\cM^{\ast}(X),\pi)$. A fixed stratum is non-empty if its formal dimension is positive. In particular, the \emph{free stratum} $\cM^{\ast}_{(e)}$ is a $5$-dimensional, smooth, noncompact manifold consisting of irreducible ASD connections.

\item The strata have topologically locally trivial equivariant cone bundle neighborhoods.

\item There is an ideal boundary in the moduli space leading to $\pi$-equivariant Uhlenbeck-Taubes compactification $(\bMX ,\pi)$ consisting of highly-concentrated ASD connections parametrized by a copy of $X$:
\begin{equation*}
\bMX =\cM(X) \cup X
\end{equation*}
where $\cM(X)$ has a $\pi$-equivariant collar neighborhood diffeomorphic to $X \times (0,\lambda)$ for small $\lambda$ with the product action being trivial on $(0,\lambda)$.

\item There are equivariantly transverse charts at each reducible connection; that is $H_A^2=0$ for each reducible connection $[A]$,  and there exists a $\pi$-invariant neighborhood which is equivariantly diffeomorphic to a cone over some linear action on complex projective space $\bCP^2$.
\end{enumerate}

By equivariant general position, the closures of singular strata of dimension $\geq 5$ are disjoint from the closure of the free stratum.  Moreover, there is a connected component of the free stratum containing the collar and the set of reducible connections.  The fixed sets that occur in $\cM_{(e)}(X)$ have  even codimension. 
Since the disjoint high-dimensional singular strata play no role in our arguments, \emph{from now on the notation $\bMX$ will just mean the closure of the free stratum}.

A final ingredient in the Yang-Mills setting is the map
$$\mu\colon H_2(X;\bZ) \to H^2(\cB^{\ast};\bZ)$$
defined in \cite[\S 5.2]{DK90}. If $\tau\colon X \to \bMX $ denotes the inclusion of the Taubes boundary, then $\tau^*(\mu(\alpha)) = PD(\alpha)$, for any class $\alpha \in H_2(X;\bZ)$. Furthermore, for the restriction of $\mu(\alpha)$ to the copy of $\CP^{\infty}$ which links the reducible connection $A$, we have
$$\mu(\alpha)\vv_{\lk [A]}=-\la c_1(L),\alpha\ra h$$
where $h \in H^2(\CP^\infty;\bZ)$ is the positive generator, and $L \to X$ is the
complex line bundle in the splitting $E= L\oplus L^{-1}$ induced by $A$ (see \cite[5.1.21]{DK90}).
\begin{remark} \label{rem:reducibles}
 Interchanging the roles of $L$ and $L^{-1}$ leaves the gauge equivalence class $[A]$  unchanged, but the identifications $\psi_{\pm }\colon \lk [A] \cong \CP^{\infty}$ differ by complex conjugation. Hence the right-hand side of this formula is independent of the ordering $L, L^{-1}$
 (see \cite[Remark 2.29]{DO86}). 
\end{remark}

The Yang-Mills moduli spaces inherit a canonical orientation from that of $X$. The top exterior power of the tangent space $T_{[A]}\cM=\Ker D_A^+$ can be identified with the determinant line bundle of the elliptic complex 
\eqncount\begin{equation}
\det D_A^+=\Lambda^{\text{max}}(\Ker D_A^+) \tensor_{\bR} \Lambda^{\text{max}}(\Coker D_A^+) 
\end{equation}  
when $H_A^2=0$ and in \cite{DK90} it is shown that the determinant line bundle $\Lambda(P)$ is independent of deformations of $A$ and extends to $\cB^{\ast}$. Moreover, $\Lambda(P)$ admits a canonical trivialization giving an orientation on the free stratum $\cM_{(e)}$ and inducing the orientation
$$\text{(inward pointing normal)} \times \text{(given orientation on $X$)}$$
on the end of the moduli space as a collar  on the equivariant Taubes embedding of $(X,\pi)$ in $(\cM(X),\pi)$ (see \cite[p.~426]{Don87}). The canonical orientation  of $\cM$ near a link of a reducible connection agrees with that $\CP^2$ (see \cite[Example 4.3]{Don87}).

In the equivariant setting, we will show that there is a preferred generator $h \in H^2(\CP^2;\bZ)$ at the link of each reducible. An action $(X, \pi)$, where $\pi=\cy p$, is \emph{oriented} by fixing a negative definite orientation on $X$,  and  a  $\pi$-equivariant $Spin^c$-structure on $X$ for $p=2$.

\begin{theorem}[{\cite{HL95}, \cite{HT04}}]\label{thm:connected}  Let  $(X, \pi)$ be an oriented action of $\pi=\cy p$ on $X$. The fixed set $\Fix(\bMX ,\pi)$ is path connected,  and inherits a preferred orientation from the $\pi$-action on the moduli space.
\end{theorem}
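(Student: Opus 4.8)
The plan is to combine the global structure of the charge‑one moduli space (items (1)--(5) above, together with the theorems of Freed--Uhlenbeck and Donaldson) with equivariant general position and a bookkeeping of the $\pi$‑representations on normal bundles. First I would pin down $Z:=\Fix(\bMX,\pi)$ as a stratified space. By construction $\bMX$ is the closure of the free‑stratum component that contains the Taubes collar and all the reducibles, so it is a connected, oriented $5$‑dimensional space which is a smooth manifold away from the finite set $R$ of reducible connections, is equivariantly a cone over a linear $\pi$‑action on $\CP^2$ near each point of $R$, and carries a $\pi$‑equivariant collar $\tau(X)\times(0,\lambda)$ on the Taubes boundary. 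Since the action on $\bMX$ is induced by real analytic isometries on $X$, the set $Z$ is a real analytic, Whitney‑stratified (hence locally path connected) subset; by equivariant general position (Bierstone \cite{Bie77}, as applied in \cite{HL92}) its top stratum of fixed \emph{irreducible} ASD connections is a smooth manifold, its singular locus is $R^{\pi}\subseteq R$ together with the deeper fixed strata, and $\Fix(X,\pi)\times(0,\lambda)$ sits in $Z$ as a collar; by the remark preceding the statement every stratum of $Z$ has even codimension in the ambient stratum of $\bMX$.

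Next, the orientation. The canonical trivialization of the determinant line bundle $\Lambda(P)$, the Taubes embedding $\tau$, and the cone structures at the reducibles are all natural, hence $\pi$‑equivariant, so $\pi$ acts by orientation‑preserving maps on $\bMX$; it then suffices to orient, compatibly over all strata, the normal bundle $\nu$ of $Z$ in $\bMX$. Along each stratum $\nu$ carries a fibrewise $\pi$‑representation with no nonzero fixed vectors. For $p$ odd this endows $\nu$ with a canonical $\pi$‑invariant complex structure (decompose $\nu\otimes_{\bR}\bC$ into character eigenbundles and declare the sum of the eigenbundles for the characters $\zeta\mapsto\zeta^{j}$, $1\le j\le (p-1)/2$, to be the holomorphic part), while for $p=2$ the chosen $\pi$‑equivariant $Spin^c$‑structure orients $\nu$; this is exactly the extra datum built into the notion of an oriented action. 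Over the collar $\nu$ restricts to the normal $\pi$‑bundle of $\Fix(X,\pi)$ in $X$ with the same structure, and over a link of a fixed reducible to the normal bundle of $\Fix(\CP^2,\text{linear})$ in $\CP^2$ with its complex structure, so the orientation of $\nu$ extends across collar and cones; $Z$ then inherits the quotient orientation of (the orientation of $\bMX$) by (that of $\nu$), and this simultaneously singles out at each reducible the preferred generator $h\in H^2(\CP^2;\bZ)$, namely the one positive in the normal directions for this complex structure.

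The substantive point, and the step I expect to be the main obstacle, is path‑connectedness of $Z$. I would first reduce to connectedness of the closure $Y:=\overline{\Fix(\cM^{\ast}(X),\pi)}$ of the top fixed stratum, by showing every point of $\Fix(X,\pi)$ and every fixed reducible is a limit of fixed irreducibles: near a fixed point of $(X,\pi)$ the highly concentrated invariant instantons are glued‑in standard instantons on the tangent space with its linear $\pi$‑action, whose local moduli model (a linear $\pi$‑action on the $5$‑ball $\cM(S^4)$) has nonempty connected fixed set, and a fixed reducible is likewise a limit of fixed irreducibles inside its $\CP^2$‑cone. Then I would prove $Y$ connected. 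The cleanest‑sounding route is a Smith‑type inequality $\sum_i\dim_{\bZ/p}H^i(Z;\bZ/p)\le\sum_i\dim_{\bZ/p}H^i(\bMX;\bZ/p)$ combined with control on the low‑dimensional homology of the charge‑one moduli space to force $b_0(Z)=1$; but the argument I believe actually works, and the one carried out in \cite{HL95} and \cite{HT04}, is an induction on the number $n$ of $\overline{\CP^2}$‑summands using an equivariant form of Donaldson's connected‑sum description of $\cM(X)$: for $n\le 1$ the fixed set is read off directly from the linear models on $S^4$ and $\overline{\CP^2}$, and the gluing step reattaches the pieces along a connected, cone‑like family of equivariant gluing parameters concentrated at fixed data, so that connectedness is preserved and no closed fixed component disjoint from the boundary can arise. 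Making this equivariant gluing picture precise at the level of the fixed loci of the parameter spaces---and thereby excluding spurious components of $Y$---is the delicate part; it relies on the cone‑bundle neighbourhood structure (item (3)) and the transversality package of \cite{HL92, HL95}, and I would follow their treatment there.
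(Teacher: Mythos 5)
Your treatment of the orientation statement is essentially the paper's: for $p$ odd you put a complex structure on the "perpendicular" directions at a fixed connection and combine it with the canonical orientation of the moduli space, and for $p=2$ you invoke the equivariant $Spin^c$ structure (via Edmonds/Ono) built into the definition of an oriented action. The paper phrases this through the splitting $\Lambda(P)=\Lambda((\Omega^{\ast})^{\pi})\tensor\Lambda((\Omega^{\ast})^{\perp})$ of the determinant line of the deformation complex (Lemma 8 of \cite{HL95}) rather than through a normal bundle of the fixed set inside $\bMX$, which has the advantage of not presupposing smoothness or transversality along the fixed strata; but the idea is the same.

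The genuine gap is in the path-connectedness, which is the substantive content of the theorem and which you explicitly leave to the references while describing a mechanism that is not the one those references use. The argument of \cite{HL95} and \cite{HT04} (and the one outlined in the paper) is not an induction on the number of $\bCP^2$ summands via an equivariant version of Donaldson's connected-sum gluing description. It proceeds inside the single compactified moduli space: (i) every $1$-dimensional fixed stratum of $\cM^{\ast}(X)$ has at least one reducible connection as a limit point (Lemma 17 of \cite{HL95}); (ii) the fixed $2$-spheres in the links of the reducibles lie in $3$-dimensional fixed strata whose closures must meet the Taubes boundary in fixed $2$-spheres of $\Fix(X,\pi)$ (Theorem 16 of \cite{HL95}); and (iii) a counting argument (\cite[p.~729]{HL95}, \cite[Theorem 3.11]{HT04}) then forces all reducibles, the boundary fixed set, and the fixed arcs into a single path component. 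None of these three steps appears in your sketch, and the replacement you propose is problematic on its own terms: for a general smooth homologically trivial action $(X,\pi)$ there is no equivariant connected-sum decomposition of $X$, nor an equivariant gluing description of $(\cM(X),\pi)$ "concentrated at fixed data", available as a hypothesis --- that smooth actions resemble equivariant connected sums is precisely the conclusion one extracts from this moduli-space analysis (Theorem C of \cite{HL95}), so the induction as described is close to circular, and the claim that the gluing-parameter family "preserves connectedness and no closed fixed component disjoint from the boundary can arise" is exactly the point that needs proof. (Your fallback Smith-theory inequality also does not suffice, since one has no a priori bound on the mod $p$ cohomology of $\bMX$.) A minor further caveat: your convention of selecting the character range $1\le j\le (p-1)/2$ to define the complex structure depends on a choice of generator of $\pi$, so you should either fix that choice or argue the resulting orientation is independent of it, as is implicit in \cite{HL95}.
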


\begin{proof} The first statement follows from \cite[Theorem C]{HL95}, but for convenience we include an outline of the proof.
First we suppose that $\pi=\cy p$, for $p$ an \emph{odd} prime. In this case,   the $\pi$-action induces a complex structure on the fibres of any $\pi$-equivariant $SO(2)$ bundle. This implies, for example, that a 2-dimensional component $F \subset \Fix(X, \pi)$ inherits a preferred orientation from the given orientation on $X$, and the complex structure induced by the $\pi$-action on the normal bundle of $F $ in $X$.


 In \cite[Lemma 8]{HL95} it is shown that the $\bZ/p$-action on the moduli space, for an odd prime $p$, induces a preferred orientation on the fixed set. The idea is that for any $\pi$-fixed ASD connection $[A]$ there is a splitting of the fundamental elliptic complex $\Omega^{\ast}=(\Omega^{\ast})^{\pi}\oplus (\Omega^{\ast})^{\perp}$ and 
\eqncount\begin{equation}
\Lambda(P)=\Lambda((\Omega^{\ast})^{\pi})\tensor \Lambda((\Omega^{\ast})^{\perp}).
\end{equation}
Since the fixed set has even codimension and $p$ is odd, the action induces a complex structure on $\Lambda((\Omega^{\ast})^{\perp})$ and hence a preferred orientation. Together with the canonical orientation of the moduli space, this induces a preferred orientation on the fixed set of any connected component containing $[A]$. The path connectedness of $\Fix(\bMX ,\pi)$ is proved in \cite[Theorem 3.11]{HT04}, where one key step is to show that every 1-dimensional fixed set in $\cM(X)^*$ has at least one reducible connection as a limit point (see \cite[Lemma 17]{HL95}). A counting argument (see \cite[p.~729]{HL95}) now completes the proof.

For involutions ($\pi=\cy 2$), the basic ingredient is a generalization due to Ono \cite{Ono:1993}  of a result of Edmonds \cite{Edmonds:1981}, namely that a $\pi$-equivariant $Spin^c$ structure on $(X, \pi)$ induces a preferred orientation on each 2-dimensional component of $\Fix(X, \pi)$. 

Since our actions are homologically trivial, 
the existence of a $\pi$-equivariant $Spin^c$ structure on $(X,\pi)$ follows by combining \cite[5.2]{edmonds1989aspects} and \cite[Theorem 6.2]{HL92}. Finally, recall that the fixed set of a linear involution on $\bCP^2$ always contains a fixed $2$-sphere. The fixed $2$-spheres in the links of the reducible connections in the moduli space are part of the 3-dimensional strata in $\cM(X)^*$, and by \cite[Theorem 16]{HL95} the closure of each component of these strata must intersect the Taubes boundary in a fixed $2$-sphere in $\Fix(X, \pi)$. 

This means that the  fixed $2$-spheres in the links of the reducible connections all inherit a preferred orientation from the choice of a $\pi$-equivariant $Spin^c$ structure on $(X,\pi)$. It follows that any two reducibles are in the closure of exactly one  component  of the fixed set in $\cM(X)^*$. A counting argument again shows that $\Fix(\bMX ,\pi)$ is path connected.
%
\end{proof}
\begin{corollary}\label{cor:preferred}
 Let $\{[A_1], [A_2], \dots, [A_n]\}$ denote the set of reducible connections in $\cM(X)$. If $(X, \pi)$ is oriented, then there is a preferred choice of 
generator $h_i \in H^2(\CP^\infty;\bZ)$ in the link $\lk [A_i] \cong \CP^\infty$ of each reducible. Equivalently, there is a preferred choice of line bundles $\{L_1, L_2, \dots, L_n\}$ such that  $E= L_i \oplus L_i^{-1}$ is the splitting induced by $A_i$, for $1\leq i \leq n$.
\end{corollary}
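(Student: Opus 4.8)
The plan is to single out each generator $h_i$ by pairing it against the fixed $2$-sphere in the link $\lk[A_i]$, using the preferred orientation of $\Fix(\bMX,\pi)$ furnished by Theorem \ref{thm:connected}. First I would unwind the local structure at a reducible $[A_i]$: it is a $\pi$-fixed point with a $\pi$-invariant neighbourhood equivariantly diffeomorphic to a cone on a linear $\pi$-action on $\CP^2=\CP(H^1_{A_i})$, where $H^1_{A_i}$ is a complex $3$-plane, its complex structure coming from the reducibility $\Gamma_{A_i}=U(1)$, equivalently from the splitting $E=L_i\oplus L_i^{-1}$. One checks the chosen lift of $\pi$ preserves this splitting: a lift swapping $L_i$ and $L_i^{-1}$ would give $c_1(L_i)=-c_1(L_i)$ in torsion-free $H^2(X;\bZ)$, hence $c_1(L_i)=0$, against $c_1(L_i)^2=-1$. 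So $\pi$ acts $\bC$-linearly on $H^1_{A_i}$ and $\Fix(\lk[A_i],\pi)$ is a disjoint union of linear subspaces $\CP(V_\chi)$ indexed by the $\pi$-weights $\chi$ on $H^1_{A_i}$; the fixed $2$-sphere $S_i\subset\lk[A_i]$ appearing in the proof of Theorem \ref{thm:connected} is the component with $\dim_{\bC}V_\chi=2$. In particular $S_i$ is a linearly embedded $\CP^1$, so its fundamental class generates $H_2(\lk[A_i];\bZ)\cong\bZ$.

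Next I would feed in Theorem \ref{thm:connected}. Once $(X,\pi)$ is oriented, $\Fix(\bMX,\pi)$ carries a preferred orientation, and (as recorded in that proof) each link $2$-sphere $S_i$ inherits a preferred orientation from it: from the equivariant $Spin^c$ structure when $p=2$, and from the $\pi$-induced complex structure on the relevant normal data when $p$ is odd. With $S_i$ so oriented, evaluation on $[S_i]$ is an isomorphism $H^2(\lk[A_i];\bZ)\to\bZ$, and I would simply define $h_i$ to be the unique generator with $\la h_i,[S_i]\ra=+1$.

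Finally I would identify this datum with a choice of line bundle via Remark \ref{rem:reducibles}: the two identifications $\psi_{\pm}\colon\lk[A_i]\cong\CP^\infty$ attached to the two orderings of $\{L_i,L_i^{-1}\}$ differ by complex conjugation, hence act by $h\mapsto -h$ on $H^2(\CP^\infty;\bZ)$. Thus a choice of $h_i$ is exactly a choice of which summand of $E=L_i\oplus L_i^{-1}$ induced by $A_i$ is named $L_i$, and I take $L_i$ to be the one corresponding to $h_i$; this yields the preferred collections $\{h_1,\dots,h_n\}$ and $\{L_1,\dots,L_n\}$. The one step with real content is the appeal to Theorem \ref{thm:connected} to orient the spheres $S_i$ canonically; granting that, the remaining points — that $S_i$ is a linear $\CP^1$, so $\la\,\cdot\,,[S_i]\ra$ detects a generator, and the conjugation symmetry of Remark \ref{rem:reducibles} — are routine.
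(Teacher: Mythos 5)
There is a genuine gap in the step that defines $h_i$. Your recipe is to orient a $\pi$-\emph{fixed} $2$-sphere $S_i\subset\lk[A_i]$ and declare $h_i$ to be the generator with $\la h_i,[S_i]\ra=+1$, asserting that $S_i$ is the component of $\Fix(\lk[A_i],\pi)$ corresponding to a weight space with $\dim_{\bC}V_\chi=2$. But for $p$ odd there is no reason such a component exists: a linear $\bZ/p$-action on the link $\CP^2$ has a fixed $2$-sphere only when two of the three weights on $H^1_{A_i}$ coincide, and in the generic (and, in the paper's examples, typical) case the fixed set of the link is three isolated points, with only $1$-dimensional fixed strata of $\cM^*(X)$ running into $[A_i]$. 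This is exactly the situation the proof of Theorem \ref{thm:connected} handles for odd $p$ via $1$-dimensional fixed arcs limiting at reducibles; the fixed $2$-spheres in links are invoked there only in the involution case $p=2$, where a linear involution on $\CP^2$ always has a fixed sphere. So your construction produces $h_i$ only at those reducibles whose links happen to contain a fixed $2$-sphere, and leaves the rest undefined.

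The paper's own proof avoids this by using the $\pi$-fixed \emph{or $\pi$-invariant} $2$-spheres in the linear action on $\lk[A_i]\cap\cM(X)\cong\CP^2$: invariant projective lines joining fixed points always exist, and they acquire a preferred (standard) orientation --- for odd $p$ from the orientation induced by the action at a fixed point together with the orientation data on the moduli space, and for $p=2$ from the equivariant $Spin^c$ structure; the Poincar\'e duals of these oriented spheres are the $h_i$. Your final step (Remark \ref{rem:reducibles}: the two orderings of $\{L_i,L_i^{-1}\}$ differ by complex conjugation, so a choice of $h_i$ is a choice of $L_i$) and your observation that the lift cannot swap $L_i$ and $L_i^{-1}$ are fine, and your argument does go through as written when $p=2$ or whenever the link contains a fixed sphere; to repair it in general you should orient an invariant line through a fixed point of the link (as in the standard-orientation discussion of Section 3) rather than insisting on a fixed sphere.
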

\begin{proof}
An oriented action $(X, \pi)$ has a preferred orientation on $\Fix(\bMX, \pi)$, and hence a preferred orientation on the $\pi$-fixed or $\pi$-invariant $2$-spheres in the linear actions on 
$$\CP^2=\lk [A_i] \cap \cM(X) \subset \CP^\infty$$
 in the link 
 of each reducible. The Poincar\'e duals of these oriented 2-spheres provide the preferred generators $h_i \in  H^2(\CP^\infty;\bZ)$, for $1\leq i \leq n$. We have seen in Remark \ref{rem:reducibles} that the choice of generator $\pm h_i$ corresponds to a choice of line bundle $L_i^{\pm}$.
\end{proof}

%
%
The $\mu$-map also provides $\pi$-invariant strata in the moduli space, since our actions $(X, \pi)$ are homologically trivial. The following construction will be used in the next section.
\begin{lemma}\label{lem:line bundle} For any $\alpha \in H_{2}(X;\bZ)$, the
 class $\mu(\alpha) \in H^2(\cB^{\ast};\bZ)$ corresponds to a $\pi$-equivariant line bundle $\cL_\alpha \rightarrow \cB^{\ast}$. Moreover, there exists an equivariant section $s$ of $\cL_\alpha$ restricted to $\cM^*(X)$,  so that the zero set  $V_\alpha =s^{-1}(0)$ is in equivariant general position in the moduli space.
\end{lemma}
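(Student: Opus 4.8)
The plan is to construct the line bundle $\cL_\alpha$ directly from the standard construction of the $\mu$-map and then upgrade it to the equivariant setting using the $\pi$-action on the based gauge group. Recall from \cite[\S 5.2]{DK90} that $\mu(\alpha)$ arises as follows: over $\cB^*(P) \times X$ there is a universal $SO(3)$ (or $SU(2)/\pm$) bundle $\mathbb{P}$, and $\mu(\alpha) = -\tfrac14 p_1(\mathbb{P})/\alpha$ is the slant product with $\alpha \in H_2(X;\bZ)$. Since $\mu(\alpha) \in H^2(\cB^*;\bZ)$ and $\cB^*$ has the homotopy type of a space with $H^2 = [\cB^*, \CP^\infty]$ (indeed $\cB^*$ is homotopy equivalent to a classifying-type space for this purpose, cf.\ \cite[\S 5.1]{DK90}), the class $\mu(\alpha)$ is represented by a complex line bundle $\cL_\alpha \to \cB^*$, unique up to isomorphism. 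First I would record this and fix such an $\cL_\alpha$.

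The next step is equivariance. Because the $\pi$-action on $X$ is homologically trivial, each isometry $g \in \pi$ fixes $\alpha \in H_2(X;\bZ)$; combined with the $\pi$-action on $\cB^*$ coming from the extension $1 \to \cG \to \cG(\pi) \to \pi \to 1$, the universal bundle $\mathbb{P} \to \cB^* \times X$ is $\pi$-equivariant (for the diagonal action), so its Pontryagin class is $\pi$-invariant, and slant product with the $\pi$-fixed class $\alpha$ yields a $\pi$-invariant class $\mu(\alpha) \in H^2(\cB^*;\bZ)^\pi$. I would then promote $\cL_\alpha$ to a genuinely $\pi$-equivariant line bundle: the obstruction to choosing a $\pi$-linearization lies in $H^2(\pi; H^0(\cB^*;\cO^*))$-type groups, but here the cleanest route is to define $\cL_\alpha$ equivariantly from the start by pulling back the tautological bundle on $\CP^\infty$ along a $\pi$-equivariant classifying map $\cB^* \to \CP^\infty$ for $\mu(\alpha)$, using that $\CP^\infty = K(\bZ,2)$ carries a compatible $\pi$-action (the trivial one suffices since $\mu(\alpha)$ is $\pi$-invariant and $\alpha$ is $\pi$-fixed). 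This gives the $\pi$-equivariant line bundle $\cL_\alpha \to \cB^*$.

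For the section: restrict $\cL_\alpha$ to the moduli space $\cM^*(X)$ (really the closure of the free stratum, per the running convention). Choose any smooth section $s_0$; then apply the equivariant general position machinery of Hambleton--Lee \cite{HL92}, following Bierstone \cite{Bie77}, chart by chart on the $\pi$-stratified space $\cM^*(X)$ as described earlier in this section, to perturb $s_0$ to an equivariant section $s$ transverse to the zero section on each stratum. The zero set $V_\alpha = s^{-1}(0)$ is then a $\pi$-invariant stratified subspace in equivariant general position, of codimension $2$ in each stratum. One should note that on the free stratum $V_\alpha$ represents $PD(\mu(\alpha))$, and near the Taubes boundary $\tau^*\mu(\alpha) = PD(\alpha)$ identifies $V_\alpha$ with a surface dual to $\alpha$ in $X$; near a reducible $[A_i]$ the restriction formula $\mu(\alpha)|_{\lk[A_i]} = -\langle c_1(L_i),\alpha\rangle h_i$ controls how many times $V_\alpha$ meets each link $\CP^\infty$.

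\textbf{Main obstacle.} The genuine work is the equivariant transversality step: an arbitrary section cannot be perturbed to be transverse while staying equivariant, so one must argue, as in \cite{HL92} and \cite{Bie77}, that generic equivariant perturbations within each stratum achieve transversality and patch together — and that this is compatible with the already-established stratified structure of $\cM^*(X)$ (properties (1)--(5) and the cone-bundle neighborhoods). The bundle-existence and invariance parts are essentially formal given the universal-bundle description of $\mu$; the delicate point is making the section generic equivariantly without destroying invariance along the fixed strata and the reducible links.
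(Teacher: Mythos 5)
The second half of your argument (restricting $\cL_\alpha$ to $\cM^*(X)$ and invoking the Hambleton--Lee/Bierstone equivariant general position machinery, arranged so that $V_\alpha$ also meets the links of the reducibles generically) is exactly what the paper does, and is fine. The gap is in the first half, at the step producing the $\pi$-equivariant structure. A $\pi$-equivariant map $\cB^{\ast}\to\CP^{\infty}$ with the \emph{trivial} action on the target is literally a map that factors through the orbit space $\cB^{\ast}/\pi$, so your ``cleanest route'' amounts to asserting that $\mu(\alpha)$ is pulled back from $H^2(\cB^{\ast}/\pi;\bZ)$. Mere $\pi$-invariance of the cohomology class $\mu(\alpha)$ does not give this, nor even the weaker (and actually relevant) statement that $\mu(\alpha)$ lifts to the Borel group $H^2(\cB^{\ast}\times_\pi E\pi;\bZ)$, which is what the existence of a $\pi$-equivariant line bundle with $c_1=\mu(\alpha)$ requires. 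That lifting is precisely the content of the lemma, and it is where the paper does its work: equivariant line bundles over $(\cB^{\ast},\pi)$ are classified by $H^2(\cB^{\ast}\times_\pi E\pi;\bZ)$, and the paper proves surjectivity of the forgetful map $H^2(\cB^{\ast}\times_\pi E\pi;\bZ)\to H^2(\cB^{\ast};\bZ)$ by a spectral sequence calculation, using that $\pi$ is cyclic of prime order (so the relevant differentials, landing in groups such as $H^3(\pi;\bZ)=0$ and $H^2(\pi;H^1(\cB^{\ast};\bZ))$, vanish). Invariance of the class is only the statement that it lies in the $E_2^{0,2}$ term; it does not by itself survive the spectral sequence, so your shortcut skips the actual point.

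Your parenthetical remark that the linearization obstruction lives in an $H^2(\pi;H^0(\cB^{\ast};\cO^{\ast}))$-type group does point toward a correct alternative route: one first checks $g^{\ast}\cL_\alpha\cong\cL_\alpha$ for $g\in\pi$ (naturality of $\mu$ plus homological triviality, together with the fact that a line bundle is determined by $c_1$), and then kills the cocycle obstruction to a coherent lift of the action because $H^2(\bZ/p;S^1)\cong H^3(\bZ/p;\bZ)=0$. But you abandon this argument rather than carry it out, and the route you do take does not establish the existence of the equivariant bundle.
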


\begin{proof} For $\pi$ a finite cyclic group, equivariant line bundles $L$ over a space  $(Y,\pi)$ are classified by a cohomology class  $[L]\in H^2(Y\times_\pi E\pi;\bZ)$. The natural map $ H^2(Y\times_\pi E\pi;\bZ) \to H^2(Y;\bZ)$ sends $[L] \mapsto c_1(L)$, and a  spectral sequence calculation shows that this map is surjective. This shows that there exists a $\pi$-equivariant line bundle $\cL_\alpha \rightarrow \cB^{\ast}$ with $c_1(\cL_\alpha) = \mu(\alpha)$. We may now restrict this line bundle to $\cM^*(X)$ and perturb the zero section into equivariant general position by the method of \cite{HL92}. The perturbation may be chosen so that $V_\alpha$ also intersects the links of the reducible connections in equivariant general position. 
\end{proof}

For a class $\alpha  \in H_2(X;\bZ)$ represented by an invariant $2$-sphere $F \subset X$, the zero section $V_\alpha$ of $\cL_\alpha$ is a stratified codimension two cobordism whose intersection with the Taubes collar may be chosen to be $F=\tau(X) \cap\, V_\alpha$. The other boundary components  provide  surfaces  in the links of  the reducible connections, that are $\pi$-invariant under the linear actions on complex projective spaces.

  \section{Smooth actions on negative definite $4$-manifolds}
  \label{sec:two}
  
The equivariant moduli space provides an equivariant stratified cobordism that relates a smooth $\pi$-action on a negative definite $4$-manifold to
 an equivariant connected sum of linear actions on complex projective spaces. 

\begin{example}[Linear Models] The complex projective plane $\CP^2$ admits linear actions of any finite cyclic group $\pi = \cy m $, given in homogeneous coordinates by the formula 
\eqncount
\begin{equation}\label{eq:action}
 t \cdot [z_0: z_1: z_2] = [z_0: \zeta^az_1: \zeta^bz_2], 
 \end{equation}
 where $t \in \pi$ is a generator, $\zeta= e^{2\pi i/m}$ is a primitive root of unity, and $a$ and $b$ are integers such that the greatest common divisor $(a, b, m) = 1$. For these actions, $\pi$ induces the identity on homology, and the singular set always contains the three fixed points $x_1=[1:0:0]$, $x_2=[0:1:0]$, and $x_3=[0:0:1]$. In addition, the three projective lines through the points $x_i$ and $x_j$, for $i\neq j$, are smoothly embedded $\pi$-invariant or $\pi$-fixed $2$-spheres with various isotropy subgroups depending on the values of $a$ and $b$ (see \cite[\S 1]{HT04}).
  \end{example}

\begin{remark} 
 Let $(X, \pi)$ denote an orientation-preserving smooth action of a cyclic group $\pi = \cy m$ on a closed oriented $4$-manifold $X$. If $m$ is odd, then the $\pi$-action induces an orientation at a fixed point $x_0 \in F$ for the normal bundle to a smoothly embedded surface $F \subset X$. In this way, any connected $\pi$-invariant surface containing a fixed point inherits a preferred orientation. For $m=2$  we need an extra ingredient, namely the existence of a $\pi$-equivariant $Spin^c$ structure, as discussed in the proof of Theorem \ref{thm:connected}.
 
 For example, in the above linear actions on $\CP^2$, the orientation induced  by the action on the projective lines $\CP^1 \subset \CP^2 $, each representing a primitive generator of $H_2(\CP^2) = \bZ$, is the complex orientation.
 \end{remark}
 
 From now on, we will work with smooth actions of cyclic groups $\pi = \cy p$ of \emph{prime} order on \emph{negative} definite $4$-manifolds 
 $X \simeq \Sharp  \bCP^2$. If $p=2$ there exists a $\pi$-equivariant $Spin^c$-structure on $(X, \pi)$.
 
 \begin{definition} Let $(X, \pi)$ be an oriented action on $X \simeq \Sharp  \bCP^2$. The \emph{standard} orientation on 
a connected $\pi$-invariant surface containing a fixed point is the orientation induced by the action, if $p$ is an odd prime, or the orientation induced by the $\pi$-equivariant $Spin^c$-structure, if $p=2$.
\end{definition}


For an oriented action $(X, \pi)$  of $\pi=\cy p$ on $X \simeq \Sharp  \bCP^2$, Corollary \ref{cor:preferred} provides a preferred generator $h_i \in H^2(\CP^\infty;\bZ)$, for $1\leq i \leq n$,  at the link $\lk [A_i]$ of each reducible. This is used in the following important definition.

 \begin{definition}\label{def:standard}
 Let $(X, \pi)$ be a smooth,  oriented action on $X\simeq \Sharp  \bCP^2$. A  diagonal basis $\{e_1, e_2, \dots, e_n\}$ for $H_2(X;\bZ)$, with $e_i \cdot e_j = - \delta_{ij}$,  is called a \emph{standard} basis if 
 $$\mu(e_i)\vv_{\lk [A_i]}=-\la c_1(L_i),e_i\ra h_i  = h_i \in H^2(\CP^\infty;\bZ),$$
 for $1\leq i \leq n$, where $[A_i]$ denotes the reducible connection in $\bMX$ determined by $\{ \pm e_i\}$ and $h_i$ denotes the preferred generator.
A standard basis is unique up to re-ordering of the basis elements.
\end{definition}

We can construct examples by equivariant connected sums at fixed points.
  The building blocks use the smooth, oriented $\pi$-actions on $\bCP^2$, 
  given by the formula \eqref{eq:action}. Note that the induced standard orientation on the projective lines is opposite to the complex orientation. We will use the notation
 $\bCP^1 \subset \bCP^2$ for this oriented embedded surface.
 
 The linear models of smooth homologically trivial $\pi$-actions on a connected sum $X = \Sharp  \bCP^2$ are then obtained by a \emph{tree} of equivariant connected sums, where we connect linear actions on $\bCP^2$ at fixed points. In order to preserve orientation, the tangential rotation numbers at the attaching points must be of the form $(c,d)$ and $(c, -d)$. 

 The equivariant moduli space shows that every smooth $\pi$-action on  an odd negative definite $4$-manifold strongly resembles an equivariant connected sum of linear actions.

\begin{theorem}[{\cite[Theorem C]{HL95}}]
Let $(X, \pi)$ be a smooth cyclic group action on $X\simeq \Sharp  \bCP^2$ inducing the identity on homology. Then there exists an equivariant connected sum of linear actions on $\bCP^2$ with the same fixed point data and tangential isotropy representations.
\end{theorem}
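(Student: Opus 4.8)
The plan is to use the equivariant Uhlenbeck--Taubes compactification $(\bMX, \pi)$ of the charge-one moduli space as a stratified equivariant cobordism, and to read off the fixed-point and tangential isotropy data at both ``ends'': the Taubes boundary $\tau(X) \subset \bMX$ on one side, and the neighbourhoods of the reducible connections $[A_1],\dots,[A_n]$ on the other. The key structural input is property (5) in the list: a $\pi$-invariant neighbourhood of each reducible is equivariantly diffeomorphic to a cone on a linear action on $\bCP^2$, so the links $\lk[A_i]$ carry linear $\pi$-actions of the form \eqref{eq:action}. First I would fix a standard basis $\{e_1,\dots,e_n\}$ of $H_2(X;\bZ)$ in the sense of Definition \ref{def:standard}, using Corollary \ref{cor:preferred}, so that each reducible $[A_i]$ is identified with the split connection on $L_i \oplus L_i^{-1}$ where $c_1(L_i) = -e_i^*$ (Poincaré dual of $e_i$). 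The linear action on $\lk[A_i] \cong \bCP^2$ then has well-defined tangential rotation numbers at its three fixed points, and these are determined by how $\pi$ acts on the normal data of the invariant $2$-spheres in $X$ meeting the relevant fixed points — essentially the tangential isotropy representations of $(X,\pi)$ at its fixed points, together with the weights of the $\pi$-action on the $e_i$-directions (which, since the action is homologically trivial, contribute nothing to homology but do contribute to the normal rotation numbers).

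Next I would run the counting/connectedness argument of Theorem \ref{thm:connected}: $\Fix(\bMX,\pi)$ is path-connected, and every $1$-dimensional piece of the fixed set in $\cM(X)^*$ limits onto a reducible connection (via \cite[Lemma 17]{HL95} for $p$ odd, resp.\ \cite[Theorem 16]{HL95} for $p=2$), while the fixed $2$-spheres in the links of the reducibles each limit, along a $3$-dimensional stratum, to a fixed $2$-sphere in $\tau(X) = X$. This produces an explicit bijective-type correspondence between the fixed-point data and invariant-sphere data carried by the linear actions at the $n$ reducibles and the fixed-point/invariant-surface data of $(X,\pi)$ itself. The upshot is a recipe: the fixed points of $(X,\pi)$, with their tangential isotropy representations, are exactly matched by gluing together the fixed-point data of the $n$ linear $\bCP^2$'s along a tree, identifying pairs of fixed points with conjugate tangential rotation numbers $(c,d)$ and $(c,-d)$ as required to preserve orientation (this is the ``tree of equivariant connected sums'' discussed just above the statement).

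Then I would \emph{build} the model: take the tree prescribed by the correspondence above, and form the honest equivariant connected sum $X' = \Sharp\, \bCP^2$ of the $n$ linear actions \eqref{eq:action} at the matched fixed points. By construction $X'$ is a smooth, homologically trivial $\pi$-action on a connected sum of $n$ copies of $\bCP^2$ (negative definite, with the orientation convention on the $\bCP^1$'s as fixed before the statement), and its fixed-point set is exactly the union of the surviving fixed points of the summands, with the prescribed tangential isotropy representations — which, by the cobordism correspondence, are precisely those of $(X,\pi)$. I would then verify that the equivariant connected sum has the same invariant $2$-sphere configuration (the $\bCP^1$'s of the summands, possibly tubed together across the connect-sum necks) realizing the same homology classes, so ``same fixed point data and tangential isotropy representations'' holds on the nose.

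The main obstacle I expect is the bookkeeping in the middle step: extracting from the stratified cobordism $(\bMX,\pi)$ a combinatorially clean tree, i.e.\ showing that the incidence pattern of $1$- and $3$-dimensional fixed strata among the $n$ reducibles and the Taubes end is actually that of a tree (connected, $n$ reducibles, no extra cycles), and that the rotation-number data glue consistently. Connectedness is supplied by Theorem \ref{thm:connected}; the fact that one gets a \emph{tree} rather than a more complicated graph, and that the matched tangential rotation numbers are forced into the orientation-compatible form $(c,d),(c,-d)$, should follow from the local model (5) at each reducible (a cone on linear $\bCP^2$, which has exactly three fixed points and a definite local incidence structure) together with the orientation statement of Theorem \ref{thm:connected} — but this is where the care is needed, and it is essentially the content that \cite[Theorem C]{HL95} already carries out, so I would cite it for the detailed verification while indicating the structure of the argument as above.
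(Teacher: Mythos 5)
The paper gives no proof of this statement---it is imported verbatim from \cite[Theorem C]{HL95}---and your outline (linear actions on $\bCP^2$ appearing as links of the reducible connections, connectedness of the fixed set in the compactified equivariant moduli space, and assembly of the fixed-point data into a tree of equivariant connected sums) is precisely the moduli-space strategy of that reference and of the material collected in Section~\ref{sect: equivariant moduli spaces}, with the hard combinatorial verification deferred, exactly as in the paper, to the citation. So your proposal is correct in structure and takes essentially the same approach as the paper's treatment of this result.
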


Let $F$ denote a fixed $2$-sphere for  the $\pi$-action on an equivariant connected sum of linear actions on  $\Sharp \bCP^2$. We give $F$ the standard orientation, and then 
 it is clear that the homology class $[F]$ can be written as $\sum_i a_i e_i$ for $a_i \in \{0,1\}$ in the  diagonal basis $e_i$ represented as $\overline{\CP^1} \subset \overline{\CP^2}$. 
The same statement holds  for smooth  $\pi$-actions on $X \simeq \Sharp\bCP^2$.
If $(X, \pi)$ is a homologically trivial action, then the fixed set consists of a disjoint union of isolated points and smoothly embedded $2$-spheres (see \cite[Proposition 2.4]{edmonds1989aspects}).

\begin{theorem}[{\cite[Thm.~16]{HL95}}]\label{thm:fixed set}
Let $\pi=\bZ/p$, for $p$ a prime, and $(X, \pi)$ be an oriented, smooth,  homologically trivial action on a smooth $4$-manifold $X  \simeq \Sharp \bCP^2$.  Then the integral homology class for each standardly oriented fixed $2$-sphere $F \subset X$ is given by an expression:
$$
[F]=\sum_i a_ie_i
$$
where $\{e_i\}$ is a  standard diagonal basis and $a_i \in \{ 0,1 \}$. 
\end{theorem}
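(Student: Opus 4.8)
The plan is to run the equivariant Yang--Mills cobordism argument for the charge-one moduli space $(\bMX,\pi)$ attached to the given fixed $2$-sphere $F$. First I would invoke Lemma~\ref{lem:line bundle}: choosing $\alpha=[F]\in H_2(X;\bZ)$ gives a $\pi$-equivariant line bundle $\cL_\alpha\to\cB^*$ with $c_1(\cL_\alpha)=\mu(\alpha)$, together with an equivariant section $s$ whose zero set $V_\alpha=s^{-1}(0)$ is in equivariant general position and meets the Taubes collar in $\tau(X)\cap V_\alpha = F$. Since $F$ is a fixed $2$-sphere, I would arrange $V_\alpha$ to be $\pi$-invariant, and then intersect everything with the fixed set: $\Fix(V_\alpha,\pi)$ becomes a (stratified) $1$-dimensional cobordism inside $\Fix(\bMX,\pi)$ whose boundary on the Taubes end is exactly the point(s) coming from $F\subset\Fix(X,\pi)$, and whose remaining ends lie in the links $\lk[A_i]$ of the reducibles, where by property (5) and Corollary~\ref{cor:preferred} the picture is a linear action on $\CP^2\subset\CP^\infty$ with its preferred generator $h_i$.

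Next I would count boundary points of this $1$-manifold with signs. Over the Taubes end, $\tau^*\mu(\alpha)=PD(\alpha)$, so the count of $\pi$-fixed endpoints on the $X$-side records the ``number'' of fixed $2$-spheres in the class $[F]$ — but since $F$ is a single standardly oriented fixed $2$-sphere, its self-intersection against the relevant classes is controlled by $[F]\cdot[F]$ and by the pairings $[F]\cdot e_i$. Over each reducible end, the formula $\mu(\alpha)|_{\lk[A_i]} = -\la c_1(L_i),\alpha\ra h_i$ together with the \emph{standard} normalization $\mu(e_i)|_{\lk[A_i]}=h_i$ of Definition~\ref{def:standard} means the contribution of the $i$-th reducible end is governed by the coefficient $a_i := -\la c_1(L_i),[F]\ra = \la e_i^*,[F]\ra$, i.e.\ by $[F]\cdot e_i$ up to sign. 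Path-connectedness of $\Fix(\bMX,\pi)$ from Theorem~\ref{thm:connected} is what lets me compare all these ends: the oriented $1$-dimensional fixed cobordism, once cut transversally by $V_\alpha$, must have algebraically zero boundary, and since (by Theorem~\ref{thm:connected} and the orientation discussion) all signs are forced to agree rather than cancel, the algebraic count is the geometric count. This pins down $\sum_i a_i$ and, reapplying the argument with $V_\alpha\cap V_\beta$ for $\beta=e_j$ or with the restriction to appropriate sub-links, forces each $a_i\in\{0,1\}$ and $[F]=\sum_i a_i e_i$.

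More concretely, I would first establish the two extreme constraints: (i) $[F]\cdot e_i\in\{0,-1\}$ (in the diagonal basis $e_i\cdot e_i=-1$), by running the cobordism with $\alpha=e_i$ and observing that the fixed endpoint on the Taubes side contributes $[F]\cdot e_i$ while the single reducible end $\lk[A_i]$ contributes $+1$ exactly when $L_i$ is the preferred line bundle, all other reducible ends being empty for dimension reasons; (ii) positivity of all coefficients, again from the coherence of orientations on $\Fix(\bMX,\pi)$ supplied by Theorem~\ref{thm:connected} (for $p$ odd via the induced complex structure on $(\Omega^*)^\perp$, for $p=2$ via the equivariant $Spin^c$-structure). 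Writing $[F]=\sum_i c_i e_i$ in the diagonal basis, (i) and (ii) give $c_i=-([F]\cdot e_i)\in\{0,1\}$, which is the claim with $a_i=c_i$.

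The main obstacle is the careful bookkeeping of \emph{orientations} along the fixed cobordism: I must check that the preferred orientation on $\Fix(\bMX,\pi)$ from Theorem~\ref{thm:connected}, the standard orientation on $F$, the complex/$Spin^c$ orientations on the fixed $2$-spheres in the reducible links, and the orientation of $V_\alpha$ coming from $c_1(\cL_\alpha)=\mu(\alpha)$ are all mutually compatible, so that the boundary contributions genuinely add (forcing nonnegativity and the bound $\le 1$) rather than cancel. This is exactly the point where the ``oriented action'' hypothesis and the preferred-generator normalization of Definition~\ref{def:standard} are indispensable, and where one has to track the complex-conjugation ambiguity of Remark~\ref{rem:reducibles} to see that every term is well defined and correctly signed. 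The dimension counts ensuring that only the ``expected'' reducible ends appear (i.e.\ that higher-dimensional singular strata and the wrong reducibles do not interfere) are routine given properties (1)--(5) and the remarks following them.
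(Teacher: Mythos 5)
Your proposal uses the paper's own toolkit (the $\mu$-map and Lemma \ref{lem:line bundle}, orientation coherence via Theorem \ref{thm:connected} and Corollary \ref{cor:preferred}, and a boundary computation in $\bMX$), but the execution has a genuine dimensional gap. With $\alpha=[F]$ the zero set $V_\alpha$ is a $3$-dimensional stratified cobordism whose Taubes end is the entire $2$-sphere $F$, and since $F$ is $\pi$-fixed this end lies in the fixed set; so $\Fix(V_\alpha,\pi)$ is not a $1$-dimensional cobordism with ``point(s) coming from $F$'' as boundary, and the signed point count you describe has no well-defined meaning as stated. The paper instead works directly with the $3$-dimensional $\pi$-fixed cobordism $V$ whose oriented boundary is $F\cup\bigcup \bd V_i$ with $\bd V_i\subset \lk[A_i]$, and extracts each coefficient by the purely cohomological evaluation $0=\la \mu(e_k),[\bd V]\ra$ as in \eqref{eqn:mu}. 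Your ``more concrete'' variant with $\alpha=e_i$ could be turned into a geometric version of that same pairing, but only after reintroducing exactly this $3$-dimensional fixed stratum and controlling all of its ends, which your write-up never does.

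More seriously, the heart of the theorem, the upper bound $a_i\le 1$, is not actually established in your argument. In the paper it follows from a structural fact about the links that you never invoke: the fixed set of a nontrivial linear action on $\CP^2$ contains at most one $2$-sphere, and that sphere is a projective line; hence each nonempty end $\bd V_i$ is a \emph{single} fixed $2$-sphere in the class of $\CP^1$, and its sign is $+1$ by the coherent orientation of $\Fix(\bMX,\pi)$ (Theorem \ref{thm:connected}, Corollary \ref{cor:preferred}). Your substitutes for this step are incorrect: the other reducible ends are not ``empty for dimension reasons'' (they may be nonempty; they contribute $0$ because $\mu(e_k)$ restricts to $\delta_{jk}h_j$ on $\lk[A_j]$), and saying the $i$-th end contributes $+1$ ``exactly when $L_i$ is the preferred line bundle'' is circular, since the preferred bundle is fixed once and for all by Definition \ref{def:standard} and what must be proved is that the end, if nonempty, is a single positively oriented line. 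Note that this multiplicity-one input is precisely what separates Theorem \ref{thm:fixed set} from Theorem \ref{thm:invariant}: orientation coherence alone only yields nonnegativity of the coefficients, as in the invariant-sphere case, not $a_i\in\{0,1\}$.
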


\begin{proof} Since $X \simeq \Sharp \bCP^2$, we have a standard diagonal basis
$\{e_1, \dots, e_n\}$ for the intersection form on $H_2(X;\bZ)$. We can express
 $[F] = \sum_i a_i e_i$, for some integers $a_i$. 
Let $\hat e_i = PD(e_i)$ be the Poincar\'e dual to $e_i$, so that $\la \hat e_i, e_j\ra = \la \hat e_i \cup\hat e_j, [X]\ra = -\delta_{ij}$. Let
 $L_i $ denote the corresponding line bundle over $X$, with $c_1(L_i) = \hat e_i$,  which provides the reduction $E=L\oplus L^{-1}$ and a reducible  ASD connection $[A_i]$ on $L_i$. 
 
 In the compactified equivariant moduli space $\bMX $, the fixed set of the $\pi$-action is path connected, by Theorem \ref{thm:connected}.
 It follows that the links of the reducible connections all inherit  the same standard orientation as $\bCP^2$.

 If $V$ denotes the $3$-dimensional $\pi$-fixed stratum which is the zero set in Bierstone general position for $\mu([F]) = \sum a_i \mu(e_i)$, then $V$ inherits a preferred orientation from the free stratum, and the induced orientation on each component $\bd V_i = V \cap\, \lk [A_i]$ depends only on its homology class.
 
  Since the fixed strata in the links arise from a linear $\pi$-action on complex projective space,  we see that $\bd V = F \cup \bigcup \bd V_i$, where each non-empty component $\bd V_i$ in the link $\lk [A_i]$  is a fixed $2$-sphere representing the homology class of $\CP^1\subset \CP^2$.
 We now evaluate
 \eqncount
 \begin{equation}\label{eqn:mu}
 0 = \la \mu(e_k), [\bd V]\ra =
  \la \mu(e_k), \tau_*[F]\ra + \sum \la \mu(e_k), [\bd V_i]\ra
  \end{equation}
  But $\la \mu(e_k), \tau_*[F]\ra = \la PD(e_k), [F]\ra = -a_k$, and 
  $$\la \mu(e_k), [\bd V_i]\ra
  = -\la c_1(L_k), e_k\ra \la h_k, [\bd V_i]\ra = \delta_{ik}.$$
   since $h_k$ is the positive generator. 
  It follows that the coefficients in $[F] = \sum a_i e_i$ all have values in $\{ 0, 1\}$.
  \end{proof}
  
We will now generalize the statement of Theorem \ref{thm:fixed set} to handle smoothly embedded $\pi$-invariant $2$-spheres. Note that such a $2$-sphere is either fixed by $\pi$ or contains exactly two $\pi$-fixed points. In either case, the standard orientation is defined.

\begin{theorem}\label{thm:invariant}
Let $\pi=\bZ/p$, for $p$ a prime, and $(X, \pi)$ be an oriented, smooth,  homologically trivial action on a smooth $4$-manifold $X  \simeq \Sharp \bCP^2$.    Let $F \subset X$ be a smoothly embedded $\pi$-invariant $2$-sphere
 with the standard orientation. Then the homology class  $[F] \in H_2(X; \bZ)$ is given by the formula
$$
[F]=\sum_i a_ie_i
$$
where $\{e_i\}$ is a standard diagonal basis 
and  each $a_i \geq 0$. 
\end{theorem}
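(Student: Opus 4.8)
The plan is to run the argument of Theorem~\ref{thm:fixed set} essentially verbatim, replacing the fixed $2$-sphere by the invariant one and the $3$-dimensional $\pi$-fixed stratum cut out by $\mu([F])$ by a $\pi$-invariant stratum. Fix a standard diagonal basis $\{e_i\}$ and write $[F]=\sum_i a_ie_i$; the integers $a_k=-e_k\cdot[F]$ depend only on $[F]$, not on any later choices. Applying Lemma~\ref{lem:line bundle} with $\alpha=[F]$ produces a $\pi$-equivariant line bundle $\cL_{[F]}\to\cB^{\ast}$ with $c_1(\cL_{[F]})=\mu([F])$; restricting it to the closure of the free stratum $\bMX$ and choosing an equivariant section $s$ in equivariant general position, the zero set $V=V_{[F]}$ is an oriented, stratified, $\pi$-invariant codimension-two cobordism with $\tau(X)\cap V=\tau(F)$ and $V\cap\lk[A_i]=\bd V_i$, where each nonempty $\bd V_i$ is an oriented $\pi$-invariant surface in the linear action on $\CP^2=\lk[A_i]\cap\cM(X)$. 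The orientation on $V$, hence the boundary orientations on the $\bd V_i$, come from the canonical orientation of the free stratum together with the complex structure of $\cL_{[F]}$; here one uses only that the component of the free stratum containing the collar and all the reducibles is connected, not the path-connectedness of the fixed set.

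Pairing $\mu(e_k)$ with the null-homologous class $[\bd V]$ exactly as in \eqref{eqn:mu} gives
$$0=\langle\mu(e_k),[\bd V]\rangle=\langle\mu(e_k),\tau_*[F]\rangle+\sum_i\langle\mu(e_k),[\bd V_i]\rangle=-a_k+\langle h_k,[\bd V_k]\rangle,$$
using $\tau^*\mu(e_k)=PD(e_k)$, the normalization $\mu(e_i)\vv_{\lk[A_k]}=\delta_{ik}h_k$ (which follows from Definition~\ref{def:standard} and $e_i\cdot e_j=-\delta_{ij}$), and the fact that $h_k$ is the preferred positive generator. Hence $a_k=\langle h_k,[\bd V_k]\rangle$, so $[\bd V_k]=a_k\,PD(h_k)$ in $H_2(\CP^2;\bZ)$, and Theorem~\ref{thm:invariant} reduces to the single positivity statement $\langle h_k,[\bd V_k]\rangle\geq 0$.

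The step I expect to be the main obstacle is precisely this positivity, and it is here that one must invoke more structure than in Theorem~\ref{thm:fixed set}, where $\bd V_k$ was a single fixed $2$-sphere representing $PD(h_k)$ rather than an arbitrary oriented $\pi$-invariant surface. The idea is that the orientation $\bd V_k$ inherits from the complex structure of $\cL_{[F]}$ and the canonical orientation of the moduli space is compatible with the complex structure the $\pi$-action induces on normal data: at each $\pi$-fixed point $x\in\CP^2$ lying on $\bd V_k$ the tangent plane $T_x\bd V_k$ is one of the two $\pi$-invariant complex lines of $T_x\CP^2$, carrying its complex orientation (for $p=2$ this orientation must instead be read off from the $\pi$-equivariant $Spin^c$-structure, as in the proof of Theorem~\ref{thm:connected}). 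Since the preferred generator is $h_k=PD[C_k]$ for the standardly-oriented $\pi$-invariant $2$-sphere $C_k\subset\lk[A_k]$, which likewise carries the complex orientation at its $\pi$-fixed points, one computes $\langle h_k,[\bd V_k]\rangle=C_k\cdot\bd V_k$ after making $C_k$ and $\bd V_k$ equivariantly transverse: at a $\pi$-fixed intersection point the two tangent planes are forced to be the two distinct $\pi$-invariant complex lines, so the local intersection number is $+1$ (interchanging two even-dimensional oriented summands preserves orientation), while the remaining intersection points occur in free $\pi$-orbits of size $p$. Controlling the sign of the free-orbit contributions is the delicate point; I would handle it using the freedom in the choice of the global section together with the $\pi$-invariant complex structure on $\CP^2$ — when $a_k\geq 0$, representing $\bd V_k$ by a $\pi$-invariant complex curve of degree $a_k$, all of whose intersections with the complex line $C_k$ are positive by positivity of intersections — and then showing that no admissible $\bd V_k$ can occur when $a_k<0$, since the restricted bundle $\cL_{[F]}\vv_{\lk[A_k]}$ then has no holomorphic section and equivariant general position at the reducible link is obstructed. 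This is where the hypotheses that $X$ is negative definite and that $(X,\pi)$ is oriented are genuinely used.
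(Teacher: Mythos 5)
Your setup and reduction are exactly the paper's: take $V$ to be the Bierstone--general-position zero set of an equivariant section of $\cL_{[F]}$, identify the Taubes boundary piece with $F$, and pair $\mu(e_k)$ with $[\bd V]$ as in \eqref{eqn:mu} to reduce everything to $\la h_k,[\bd V_k]\ra\geq 0$. The gap is precisely at that positivity step, and your proposed mechanism for it does not work. First, it is circular: you propose to ``represent $\bd V_k$ by a $\pi$-invariant complex curve of degree $a_k$'' when $a_k\geq 0$, but $a_k\geq 0$ is what has to be proved, and in any case $\bd V_k$ is not at your disposal --- it is the intersection of the zero set of a section of a line bundle over the moduli space with the link, and (as the paper points out) it need only be topologically embedded, possibly not even locally flat near the fixed points, so no holomorphic or even smooth representative can be assumed. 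Second, the exclusion of $a_k<0$ via ``the restricted bundle $\cL_{[F]}\vv_{\lk[A_k]}$ has no holomorphic section and equivariant general position at the reducible link is obstructed'' is false as stated: there is no holomorphic structure anywhere in this construction, and smooth equivariant sections in general position exist for any value of the degree; the zero set then simply represents $a_k\,PD(h_k)$, so nothing rules out $a_k<0$ by bundle-theoretic means. Third, even the local ``$+1$ at fixed intersection points, free orbits in groups of $p$'' computation does not control the sign of the free-orbit contributions, which is exactly the hard part you acknowledge.

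The paper's actual argument supplies the positivity by global information about the fixed set, which you explicitly discard (``one uses only that the component of the free stratum containing the collar and the reducibles is connected, not the path-connectedness of the fixed set''). Since $F$ is invariant but not fixed, it contains exactly two isolated fixed points $x_0,x_1$; the closure $\overline V$ contains the two $1$-dimensional $\pi$-fixed strata emanating from these points, and $Z=\Fix(\bMX,\pi)\cap\overline V$ is a \emph{tree} by \cite[Theorem 3.11]{HT04}. Combined with Bierstone general position at the links and the fact that each linear $(\bCP^2,\pi)$ has at most three isolated fixed points, one concludes that every nonempty $\bd V_i$ contains exactly two fixed points, that the ``initial'' component $\bd V_0$ carries the standard orientation agreeing with the complex positive orientation of $\CP^1\subset\CP^2$, and that this orientation propagates along the connected tree $Z$ to every component $\bd V_i$; this is what yields $\la h_k,[\bd V_i]\ra\geq 0$ and finishes the proof via \eqref{eqn:mu}. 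So you would need either to reinstate the fixed-set connectivity/tree argument or find a genuinely new way to orient the $\bd V_i$; the holomorphic-positivity route is not available.
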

\begin{remark}
 If $F$ does not have the standard orientation, then each $a_i \leq 0$.
\end{remark}
\begin{proof} Let $F$ be a smoothly embedded $\pi$-invariant $2$-sphere in the action $(X, \pi)$. We assume that $F$ is not $\pi$-fixed, hence it contains exactly two isolated fixed points $x_0, x_1 \in F$. Let $\alpha = [F] \in H_2(X;\bZ)$ and let $V \subset  \bMX^*$ be the zero set of an equivariant section (in Bierstone general position) of the line bundle $\cL_\alpha$ given by $\mu(\alpha) \in H^2(\cB^*;\bZ)$. We may assume that $V \cap\, X = F$ at the Taubes boundary, and that $\bd V_i :=V \cap\, \lk [A_i]$ is a $\pi$-invariant surface in a linear action $(\bCP^2, \pi)$ for each reducible connection $[A_i]$. 

Note that without additional information, we can only conclude that the $\pi$-invariant surfaces  $\bd V_i$ are smoothly embedded in $\bCP^2$ except possibly in small neighbourhoods around the fixed points, where the surfaces might contain cones over $\pi$-invariant knots in $(S^3, \pi)$. At these points the embeddings are only topological (and not locally flat).

However,  we observe that the compactification $\overline{V}$ contains two $1$-dimensional $\pi$-fixed strata of $\bMX^*$ joining each of the isolated fixed points on $F$ to reducible connections, and passing through isolated fixed points on two components, say on $\bd V_0$ and $\bd V_1$. By Bierstone general position, the intersections $V \cap \, \lk [A_i]$ are equivariantly transverse at these points (for $i = 0, 1$). Moreover, the fixed set 
$$Z:=\Fix(\bMX, \pi)\cap\, \overline{V}$$
 is a tree by \cite[Theorem 3.11]{HT04}.
Since each link $(\bCP^2, \pi)$ has at most three isolated fixed points, and there is a unique  path in $Z$ from $x_0$ to $x_1$ (up to homotopy),  it follows that $\Fix(\bd V_i, \pi)$  contains exactly two fixed points for each non-empty $\pi$-invariant surface $\bd V_i$. At the ``initial" component $\bd V_0$, that contains a fixed point connected to $x_0 \in F$, we also see that the standard orientation on $\bd V_0$ agrees with the complex ``positive" orientation on $\CP^1 \subset \CP^2$. Since $Z$ is connected, each non-empty component $\bd V_i$ also inherits the positive orientation. It follows that  $\la h_k, [\bd V_i]\ra  \geq 0$, and the same calculation given in \eqref{eqn:mu} completes the proof.
\end{proof}

 \begin{remark}\label{rm:orientation}
Note that 
if $F \subset X$ is $\pi$-invariant but not $\pi$-fixed, and there exists a  $\pi$-fixed $2$-sphere $S$, standardly oriented and with $[S]^2 = -1$,  then $F$ has the standard orientation if  $[F]\cdot [S] = -1$. 
\end{remark}

\section{Proof of Theorem A}
\label{sect: proof of theorem A}
The minimal negative definite resolution for a Brieskorn homology sphere is obtained from 
the dual resolution graph of the singularity whose link is the Brieskorn homology $3$-sphere $\Sigma(a_1,a_2,a_3)$ (see Saveliev \cite[Ex.~1.17]{Saveliev:2002}).   For these singularities, the graph is a tree with weight $\delta$ on the central node,  and weights on the branches given by a continued fraction decomposition $a_i/b_i=[t_{i1},t_{i2},...,t_{im_i}]$ of the Seifert invariants.
 These weights are uniquely determined by the condition $t_{ij} \leq -2-a_i < b_i <0$ and 
\eqncount\begin{equation}
a_1 a_2 a_3 b_i / a_i \equiv 1 \pmod {a_i}, 
\end{equation}
where $\delta$ satisfies 
\eqncount\begin{equation}
\delta=\dfrac{-1}{a_1a_2a_3}+\sum_{i=1}^3 \dfrac{b_i}{a_i} \leq -1.
\end{equation}
  Fintushel and Stern defined the $R$-invariant for Brieskorn homology spheres, which is   an odd integer $R(a_1,a_2,a_3) \geq -1$. Moveover, if $\Sigma(a_1,a_2,a_3)$ bounds a smooth contractible manifold, then $R(a_1,a_2,a_3) = -1$ (see \cite[Theorem 1.1]{FS85}).  Neumann and Zagier \cite{NZ85} gave the calculation 
 $$R(a_1,a_2,a_3)=-2\delta-3.$$
  This implies that the central node  in the resolution graph of $ \Sigma(a_1,a_2,a_3)$ has weight  $\delta= -1$.

Equivariant plumbing on the defining graph $\Gamma$ gives the minimal negative definite resolution $M(\Gamma)$, where each node in the graph  is represented by an embedded $2$-sphere with self-intersection number given by its weight.
The circle action on $\Sigma(a, b,c)$ which arises from its Seifert fibering structure extends over the plumbing 
(see  \cite{Orlik}, \cite{Neumann78}). By construction, 
the central node sphere is  fixed under the circle action.

By restricting this circle action to $\pi=\cy p$, for any integer $p$ relatively prime to $a, b, c$, we obtain a simply connected, smooth $4$-manifold  $M(\Gamma)$ with a smooth, homologically trivial $\pi$-action,  whose boundary is $\Sigma = \Sigma(a,b,c)$ with the standard free $\bZ/p$-action.

Suppose that the standard free action on $\Sigma(a,b,c)$  also extends smoothly over  another compact smooth $4$-manifold $W$ with $\bd W = \Sigma$. Then we  obtain a smooth, closed $4$-manifold
\eqncount
\begin{equation}\label{eqn:acyclic}
X=M(\Gamma) \cup_{\Sigma} (-W)
\end{equation}
 together with  smooth, homologically trivial $\pi$-action. If $W$  is \emph{acyclic}, meaning that $W$ has the integral homology of a point,  and $\pi_1(W)$ is the normal closure of the image of $\pi_1(\Sigma)$, then $X$ will  be closed, 
 \emph{simply connected}, smooth $4$-manifold with \emph{odd} negative definite intersection form. In other words, $X \simeq \Sharp \bCP^2$ where $n = b_2(M(\Gamma))$. To prove Theorem A, it is enough to consider actions of $\pi=\cy p$ with $p$ prime.

\begin{theorem}\label{thm:A}
Suppose $\Sigma(a,b,c)$ bounds a smooth acyclic $4$-manifold $W$, such that $\pi_1(W)$ is the normal closure of the image of $\pi_1(\Sigma(a,b,c))$. If $p$ is a prime with $p \nmid abc$, then a free action of $\pi=\bZ/p$ on $\Sigma(a,b,c)$ does not extend to a smooth action on $W$.
\end{theorem}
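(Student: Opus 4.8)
The plan is to argue by contradiction using the equivariant moduli space of the closed negative definite manifold $X = M(\Gamma) \cup_\Sigma (-W)$, and to derive a combinatorial contradiction from the fact that the central node sphere $S_0 \subset M(\Gamma)$ is $\pi$-fixed with self-intersection $\delta = -1$. Suppose the free $\pi$-action on $\Sigma(a,b,c)$ extends smoothly over $W$. By the discussion preceding the statement, $X \simeq \#_1^n \bCP^2$ is closed, simply connected, smooth, with odd negative definite intersection form, and carries a smooth homologically trivial $\pi$-action. First I would fix an orientation of the action: a negative definite orientation on $X$, together with a $\pi$-equivariant $Spin^c$-structure in the case $p=2$ (which exists since the action is homologically trivial, by the references cited in the proof of Theorem~\ref{thm:connected}). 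This gives us, via Corollary~\ref{cor:preferred} and Definition~\ref{def:standard}, a standard diagonal basis $\{e_1, \dots, e_n\}$ for $H_2(X;\bZ)$.

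Next I would analyze the configuration of invariant and fixed $2$-spheres coming from the plumbing. The central node sphere $S_0$ is $\pi$-fixed (it is fixed by the full circle action, hence by $\pi$), smoothly embedded, with $[S_0]^2 = \delta = -1$ by the Fintushel--Stern / Neumann--Zagier computation, since $R(a,b,c) = -1$ for a Brieskorn sphere bounding a contractible (indeed acyclic suffices for the relevant part) manifold. Applying Theorem~\ref{thm:fixed set} to $S_0$ with its standard orientation, we get $[S_0] = \sum_i a_i e_i$ with each $a_i \in \{0,1\}$, and since $[S_0]^2 = -1$ this forces exactly one coefficient to be nonzero, i.e. $[S_0] = \pm e_j$ for a single $j$; replacing $e_j$ by $-e_j$ if necessary (or rather, after possibly reorienting within the allowed freedom) we may take $[S_0] = e_j$. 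Now consider the neighboring node spheres $S_1, \dots, S_r$ in the graph $\Gamma$ adjacent to the central node. Each $S_\ell$ is $\pi$-invariant (the equivariant plumbing preserves it), smoothly embedded, intersects $S_0$ transversally in one point, and the branches away from the central node are disjoint from $S_0$. I would give each $S_\ell$ the standard orientation; by Remark~\ref{rm:orientation}, since $[S_\ell]\cdot[S_0]$ should be arranged to equal $-1$ (orienting so the geometric intersection point counts as $-1$ relative to the standard orientation of the $\pi$-fixed sphere $S_0$ with $[S_0]^2 = -1$), the sphere $S_\ell$ indeed carries the standard orientation, so Theorem~\ref{thm:invariant} applies and $[S_\ell] = \sum_i a_i^{(\ell)} e_i$ with all $a_i^{(\ell)} \geq 0$.

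The contradiction should now emerge from the intersection numbers in the plumbing graph, which are all $\leq -2$ on the branch nodes. Concretely, for a branch node sphere $S$ at the end of a branch (a leaf), $[S]^2 = t \leq -2 - a_i$ is strongly negative, yet $[S] = \sum a_i e_i$ with $a_i \geq 0$ gives $[S]^2 = -\sum a_i^2$; combined with $[S]$ being nonzero (it is a generator of a free summand, being part of a basis for $H_2(M(\Gamma))$ which injects into $H_2(X)$), one gets constraints. The real leverage comes from propagating positivity along the branch: if $S_\ell$ adjacent to $S_0$ has $[S_\ell]\cdot[S_0] = -1$ and $[S_0] = e_j$, then $a_j^{(\ell)} = 1$; walking along the tree and using that consecutive spheres meet in $+1$ (geometrically) hence contribute $-1$ to the intersection pairing in the chosen orientations, while each $[S]^2 = -\sum (a_i)^2 \le -2$ forces at least two nonzero coefficients on each branch node, one then runs out of basis elements or violates the negative definiteness / the precise continued-fraction values. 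I expect the main obstacle to be pinning down the orientation bookkeeping: Theorems~\ref{thm:fixed set} and~\ref{thm:invariant} apply only to the \emph{standard} orientation, so I must verify carefully — using Remark~\ref{rm:orientation} and the presence of the $\pi$-fixed $(-1)$-sphere $S_0$ — that every sphere in the plumbing configuration can be simultaneously standardly oriented with the intersection pattern of $\Gamma$ intact, and then check that the resulting system of equations $[S_k]\cdot[S_\ell] = \Gamma_{k\ell}$ with $[S_k] = \sum_i a_i^{(k)} e_i$, $a_i^{(k)} \in \bZ_{\geq 0}$ (with $a_i^{(0)} \in \{0,1\}$), is genuinely inconsistent given $\delta = -1$ and $t_{ij} \leq -2$. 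This last inconsistency is essentially a linear-algebra lemma about non-negative integer representations of the plumbing lattice, and isolating the cleanest form of that lemma is where the real work lies.
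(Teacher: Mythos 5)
Your setup coincides with the paper's: form $X=M(\Gamma)\cup_{\Sigma}(-W)$, orient the action, take a standard diagonal basis, use Theorem~\ref{thm:fixed set} to write the central $(-1)$-sphere as $\pm e_1$ (after reordering), and use Remark~\ref{rm:orientation} together with Theorem~\ref{thm:invariant} to constrain the signs of the coefficients of the invariant spheres adjacent to it. But the decisive step is missing. You defer the contradiction to an unproven ``linear-algebra lemma about non-negative integer representations of the plumbing lattice,'' sketched via propagation along a branch, the weights $t_{ij}\leq -2$, leaves, and continued fractions --- and you explicitly say that isolating and proving that lemma ``is where the real work lies.'' That step cannot be a purely lattice-theoretic fact: by Donaldson's theorem applied to $X$ (no group action needed), the plumbing lattice \emph{does} embed in the diagonal lattice, so non-negativity constraints on a single branch-by-branch walk do not by themselves produce an inconsistency; the contradiction must play the equivariant sign constraints of at least two invariant spheres against each other. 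Moreover, your propagation route faces an orientation problem you only partially flag: for nodes not adjacent to the central sphere, Remark~\ref{rm:orientation} does not compare their standard and complex orientations, so the sign bookkeeping along a branch is not automatic.

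The paper's proof closes the argument in one line using a configuration you set up but never exploit: take \emph{two} neighbours $F_2,F_3$ of the central node lying on \emph{different} branches, so that $[F_2]\cdot[F_3]=0$ while $[F_2]\cdot[F_1]=[F_3]\cdot[F_1]=1$. If $e_1=[F_1]$, then $[F_2]=-e_1+\sum_{i\geq 2}a_ie_i$ and $[F_3]=-e_1+\sum_{i\geq 2}b_ie_i$, and by Remark~\ref{rm:orientation} the complex orientation on $F_2,F_3$ is opposite to the standard one, so Theorem~\ref{thm:invariant} forces all $a_i,b_i\leq 0$. Then $0=[F_2]\cdot[F_3]=-1-\sum_{i\geq 2}a_ib_i\leq -1$, a contradiction (the case $e_1=-[F_1]$ is symmetric, with all entries $\geq 0$). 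No leaves, weights $\leq -2$, or continued-fraction data are needed; the only graph-theoretic input is that the central node has at least two neighbours which are disjoint from each other, which holds for every Brieskorn resolution graph. A smaller point: you cannot ``replace $e_j$ by $-e_j$'' at will, since the standard basis of Definition~\ref{def:standard} is rigid up to reordering (the signs are pinned by the preferred generators of Corollary~\ref{cor:preferred}); the correct statement is that the standardly oriented central sphere equals $+e_j$, or else one argues both sign cases as the paper does. As written, your argument is a correct framework with the concluding contradiction left unestablished, so it does not yet prove the theorem.
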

\begin{proof} 
We form the manifold $X=M(\Gamma) \cup_{\Sigma} (-W)$ from the given acyclic manifold $W$ and the plumbed manifold $M(\Gamma)$ as described in \eqref{eqn:acyclic}. We have $X \simeq \Sharp \bCP^2$ where $n = b_2(M(\Gamma))$.
 There is a basis for  $H_2(X;\bZ)$ represented by the nodal $2$-spheres in the plumbing construction. Since the plumbing is done equivariantly, we obtain
 a configuration of smoothly embedded
  $\pi$-fixed $2$-spheres and $\pi$-invariant $2$-spheres in $X$, with at least one $\pi$-fixed $2$-sphere $F_1$ of self-intersection $-1$ (namely the central node in the graph $\Gamma$). We fix an ordering on the other nodes so that $F_2$ and $F_3$ are adjacent to $F_1$.
  
  We give each of these $2$-spheres the complex orientation 
 and let 
$$Q_X \colon H_2(X;\bZ) \times H_2(X;\bZ) \to \bZ$$
 denote the intersection form of $X$,  expressed as a matrix with respect to the basis 
 $$\cF=\{[F_1], [F_2], \dots, [F_n]\}. $$
  In other words, 
  $Q_X$ is the plumbing matrix defined by the graph $\Gamma$, in which $[F_i]\cdot [F_j] = 1$, for $i \neq j$, whenever this intersection is non-zero.

  Let $\cE=\{e_1, e_2, \dots, e_n\}$ denote a standard diagonal basis  given by an (orientation-preserving) homotopy equivalence $X \simeq \Sharp \bCP^2$, and the orientation convention given in Definition \ref{def:standard}.
  Let $C$ denote the change of basis matrix (with respect to $\cE$ and $\cF$),  so that $C^tQ_XC=-I$ is in diagonal form with respect to the basis $\cE$. Then the columns of $C$ give the components of  each $e_i$ in terms of the basis $\cF$, and similarly the columns of  $C^{-1}$ give the expressions for  each $F_i$ in terms of the standard diagonal basis $\cE$. 
 
  Since $F_1$ is a fixed $2$-sphere with $[F_1]\cdot [F_1] = -1$, we may assume that $e_1 = \pm [F_1]$ in the diagonal basis $\cE$. Suppose first that $e_1 = [F_1]$.
  The inverse $C^{-1}$ then has the form 
\eqncount\begin{equation}
C^{-1} =
 \begin{pmatrix}
  1 & -1 & -1 & \ast & \cdots & \ast \\
  0 &  a_2 & b_2 & \ast & \cdots & \ast \\
  0&  a_3 & b_3 & \ast & \cdots & \ast \\
  \vdots & \vdots  & \vdots & \vdots & \vdots & \vdots  \\
  0 & a_n & b_n & \ast & \cdots & \ast
 \end{pmatrix}  
\end{equation}
where we have labelled the base node $F_1$ and two adjacent nodes $F_2$ and $F_3$, such that $[F_2]\cdot [F_1] = [F_3]\cdot [F_1] = 1$, but $[F_2]\cdot [F_3] = 0$. By construction, $F_2$ and $F_3$ are $\pi$-invariant (but not fixed) embedded $2$-spheres. This configuration always occurs in the plumbing graph for $M(\Gamma)$. By Remark \ref{rm:orientation}, the complex orientation for $F_2$ and $F_3$ in the plumbing is opposite to the standard orientation.

 It follows that $[F_2] = -e_1 + a_2 e_2  \dots $, and similarly that $[F_3] = -e_1 + b_2 e_2  \dots $. 
By Theorem \ref{thm:invariant} we can conclude that all the non-zero entries in the second and third column are actually \emph{negative}.  On the other hand, 
since 
$$0 = [F_2]\cdot [F_3] = - 1-  \sum_{i=2}^n a_ib_i$$
and each term  $a_i b_i \geq 0$, we have a contradiction. If $e_1 = -[F_1]$, then $F_2$ and $F_3$ have the standard orientation and all the non-zero entries in the second and third columns of $C^{-1}$ must be positive (by Theorem  \ref{thm:invariant}). We obtain a contradiction as before.
\end{proof}

\section{Locally Linear Extensions}
\label{sect: locally linear extensions}

In this section we briefly survey some results of Edmonds \cite{E87} and Kwasik-Lawson \cite{KL93}.  First it should be noted, by the work of Freedman \cite{Freedman:1982}, that every integral homology $3$-sphere $\Sigma$ bounds a topological contractible $4$-manifold $W$. That every free action on $\Sigma$ can be extended to a topological action on a topological contractible $4$-manifold was first noted by Ruberman and Kwasik-Vogel \cite{KV86}. The question of extending a free action of  $\pi=\bZ/p$ on $\Sigma$ to a locally linear action on a contractible $4$-manifold was studied by Edmonds \cite{E87} for $p$ a given prime, including the case of an involution $p=2$. This work was generalized by Kwasik-Lawson \cite{KL93} to cover actions of any finite cyclic group.

\medskip
\noindent
\textbf{Conventions}. In this section, the notation $\pi$ denotes a finite cyclic group, not necessarily of prime order. We also write $\pi=\cy p$ to specify the order. All $\pi$-actions are orientation-preserving.

\medskip
The result for locally linear actions will involve additional spectral and torsion invariants.
 The equivariant eta invariant is the $g$-signature defect term for manifolds with boundary. Let $\partial W=\Sigma$ and $Q=\Sigma/\pi$, then the relation between the rho invariants $\rho(Q, \gamma)$ of the orbit space and the equivariant eta invariant is given by
\eqncount\begin{equation}
\eta_t(\Sigma)=\sum_{\gamma} \rho(Q, \gamma) \overline{\chi}_{\gamma}(t), \quad \text{for\ } t \in \pi,\  t \neq 1, 
\end{equation} 
where the sum contains values $\chi_\gamma(t)$ of the characters  of  the irreducible representations $\gamma$ of $\pi=\bZ/p$.
 There is also a Fourier transform formula \cite[2.8]{APS2} expressing rho invariants in terms of the equivariant eta invariant $\eta_t$:
\eqncount\begin{equation}
\rho(Q, \gamma)=\dfrac{1}{p} \sum_{t\neq 1}\eta_t(\Sigma)(\chi_{\gamma}(t)-\dim(\gamma)).
\end{equation}
As an example that we will use later, the rho invariants of classical lens spaces are given in terms of the representions $\gamma_\ell(t) = e^{2\ell\pi i/p}$: 
\eqncount\begin{equation}
\rho(L(p;r,s),\gamma_\ell)=\dfrac{4}{p}\sum_{k=1}^{p-1}\cot(\dfrac{\pi k r}{p})\cot(\dfrac{\pi k s}{p})\sin^2(\dfrac{\pi k \ell}{p})
\end{equation}
which can be easily computed from the above formula using the equivariant eta invariant $\eta_t(S^3)$ of the $3$-sphere with the action extending to a disk with rotation number $(r,s)$.
\eqncount\begin{equation}
\eta_t(S^3)=\dfrac{(t^r+1)(t^s+1)}{(t^r-1)(t^s-1)}, \quad \text{for\ }  t \in \pi,  \  t \neq 1.
\end{equation}
We will also need the notion of Reidemeister torsions before we state the main result about locally linear extensions. This torsion invariant arises from an acyclic chain complex as follows. Give $Q$ a cell structure and let $\Sigma$ be given the induced cell structure from the regular covering. Then $C_{\ast}(\Sigma)$ is a chain complex of free $\bZ[\pi]$ modules. Using the natural homomorphisms 
$$\bZ[\pi]\rightarrow \bZ[\zeta] \rightarrow \bQ[\zeta]$$
 where $t \mapsto \zeta=e^{2\pi i/p}$, we see that the twisted homology of $C_{\ast}(\Sigma) \tensor \bQ[\zeta]$ is acyclic with torsion $\Delta(Q)$ in $\bQ[\zeta]^{\times}$. The Reidemeister torsion of the lens space $L(p;r,s)$ is $\Delta (L(p;r,s)) \sim (\zeta^r-1)(\zeta^s-1)$.

\begin{theorem}[{Edmonds \cite{E87}, Kwasik-Lawson \cite[p.~32]{KL93}}]
\mbox{}
\begin{enumerate}
\item  A free action of $\pi=\bZ/p$ on an integral homology $3$-sphere $\Sigma$ extends to a locally linear action on a contractible $4$-manifold $W$   with one fixed point  if and only if the quotient rational homology sphere $Q=\Sigma/\pi$ is $\bZ[\pi]$ $h$-cobordant to a classical lens space $L$.\\
\item  A rational homology sphere $Q=\Sigma/\pi$ is $\bZ[\pi]$ $h$-cobordant to classical lens space $L$ if and only if there is a $\bZ[\pi]$-homology equivalence $f\colon Q \rightarrow L$ under which their rho invariants are equal and the Reidemeister torsions satisfy $\Delta(Q) \sim u^2\Delta(L)$ where $u$ is the image of a unit in $\bZ[\pi]$.
\end{enumerate}
\end{theorem}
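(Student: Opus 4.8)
The plan is to derive both statements from the equivariant surgery framework of Edmonds \cite{E87} and Kwasik--Lawson \cite{KL93}, organizing the argument as a dictionary between group actions and cobordisms followed by a surgery computation. For part (i), I would first record (via P.~A.~Smith theory, as in the Remark after Theorem A) that a locally linear extension of a free $\pi$-action to a contractible $W$ has fixed set a $\bZ/p$-homology point; in dimension four local linearity forces this to be a single isolated fixed point $x_{0}$ with an equivariant neighbourhood equal to the cone on a linear action $(S^{3},\pi)$. Deleting the open cone yields a compact $4$-manifold $W_{0}$ carrying a \emph{free} $\pi$-action with $\partial W_{0}=\Sigma\sqcup(-S^{3})$; since $W$ is contractible, both boundary inclusions are integral homology equivalences, so passing to orbit spaces shows that the cobordism $N=W_{0}/\pi$ between $Q=\Sigma/\pi$ and the lens space $L=S^{3}/\pi$ satisfies $H_{\ast}(N,Q;\bZ[\pi])=H_{\ast}(N,L;\bZ[\pi])=0$, i.e.\ it is a $\bZ[\pi]$ $h$-cobordism. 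For the converse I would start with such an $N$, pass to the induced $\pi$-cover $\widetilde{N}$ (an integral homology $h$-cobordism from $\Sigma$ to $S^{3}$ with free $\pi$-action), and cap off the $S^{3}$-end by the equivariant cone on the corresponding linear action; this produces a locally linear $\pi$-action on a $4$-manifold $W$ with $\partial W=\Sigma$, exactly one fixed point, and the integral homology of a point, and the hypothesis that $\pi_{1}(W)$ be normally generated by the image of $\pi_{1}(\Sigma)$ then upgrades acyclicity to contractibility.

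For part (ii) the easy direction is to check that each of the three conditions is a $\bZ[\pi]$ $h$-cobordism invariant: a $\bZ[\pi]$ $h$-cobordism restricts on its ends to a $\bZ[\pi]$-homology equivalence $f\colon Q\to L$; the Atiyah--Patodi--Singer $G$-signature theorem forces $\rho(Q,\gamma)=\rho(L,\gamma)$ because the signature defect of the bounding cobordism vanishes; and the torsion of the acyclic complex $C_{\ast}(\Sigma)\otimes\bQ[\zeta]$ changes across the cobordism only by the square of the image of a unit of $\bZ[\pi]$, giving $\Delta(Q)\sim u^{2}\Delta(L)$. The substantive direction is the converse: starting from $f$, I would first make it a degree-one normal map and, after adjusting the normal structure, normally bord it to the identity of $L$, producing a bordism $(N;Q,L)$ over $L\times I$; then I would try to surger the interior into a $\bZ[\pi]$-homology equivalence. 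The obstruction lives in a relative surgery group $L_{4}(\bZ[\bZ/p])$, and using the classical computation of the $L$-groups of a cyclic group one sees that it is detected precisely by the collection of multisignatures, equivalently by the difference $\rho(Q,\gamma)-\rho(L,\gamma)$; when this vanishes, interior surgery produces a $\bZ[\pi]$-homology $h$-cobordism. The remaining torsion condition $\Delta(Q)\sim u^{2}\Delta(L)$ records the residual $\mathrm{Wh}(\bZ[\pi])$-indeterminacy in the construction --- equivalently, which lens space within the homotopy type of $L$ the cobordism actually joins $Q$ to --- the square reflecting the units of $\bZ[\pi]$ realized by self-homotopy-equivalences of lens spaces.

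The step I expect to be the main obstacle is exactly this last surgery computation in part (ii): identifying the relevant obstruction group $L_{4}(\bZ[\bZ/p])$, proving (via the $G$-signature theorem) that the rho invariant is a \emph{complete} invariant of the surgery obstruction in this homology-surgery setting, and carrying along the Whitehead-torsion bookkeeping that isolates the factor $u^{2}$ and pins down the target lens space. By comparison, the Smith-theoretic fixed-point count and the cone construction in part (i), and the cobordism-invariance of the three conditions at the start of part (ii), are essentially formal once the surgery machinery is in place.
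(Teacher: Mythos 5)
This statement is quoted background: the paper itself gives no proof, attributing both parts to Edmonds \cite{E87} and Kwasik--Lawson \cite[p.~32]{KL93}, so your proposal has to be measured against those arguments. Your outline does follow their general strategy (cone off the unique fixed point to translate between pseudo-free extensions and $\bZ[\pi]$ $h$-cobordisms, then homology surgery controlled by rho invariants and Reidemeister torsion), and the forward direction of (i) and the necessity direction of (ii) are essentially correct. But two steps, as written, have genuine gaps.

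In the converse of (i), capping the $\pi$-cover $\widetilde N$ of the $h$-cobordism with the cone on the linear $S^3$-action only yields an \emph{acyclic} manifold: $\pi_1(W)=\pi_1(\widetilde N)$ is the kernel of $\pi_1(N)\to\pi$, and a $\bZ[\pi]$-homology $h$-cobordism puts no constraint on it. Your appeal to ``the hypothesis that $\pi_1(W)$ be normally generated by the image of $\pi_1(\Sigma)$'' is illegitimate here --- it is not a hypothesis of this theorem (it belongs to the smooth non-extension result, Theorem \ref{thm:A}), and in any case normal generation does not force $\pi_1(W)=1$, which is what contractibility of an acyclic $W$ requires. The missing step is to first modify $N$ rel boundary so that $\pi_1(N)\cong\pi$: kill the kernel by surgery on circles and then kill the resulting classes in $H_2(N;\bZ[\pi])$ by embedded spheres, which in dimension four needs Freedman's disk embedding theorem for the good group $\cy p$ \cite{Freedman:1982}. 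This reliance on the topological category is exactly why the conclusion is only locally linear, and it never appears in your write-up.

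In (ii), the surgery obstruction is not literally an element of $L_4(\bZ[\cy p])$: since $\pi_1(Q)\neq\pi$ and one only asks that the ends of the cobordism be $\bZ[\pi]$-homology equivalences, the correct framework is homology surgery (Cappell--Shaneson type $\Gamma$-groups, or Wall groups with an intermediate torsion decoration after the $\pi_1$-adjustment above); the condition $\Delta(Q)\sim u^2\Delta(L)$ is precisely this decoration/realization issue, not an afterthought about which lens space one lands on. You also need to justify that $f$ can be taken to be normally cobordant to $\mathrm{id}_L$ (a statement about normal invariants of $Q\to L$, not automatic), to be careful that the multisignature detects the obstruction in the case $p=2$ as well, and once more to perform the interior surgeries in TOP via Freedman. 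With those points supplied, your skeleton does match the Kwasik--Lawson argument.
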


Recall that a $\bZ[\pi]$ $h$-cobordism $V$ between $Q$ and $L$ is one where $H_{\ast}(V,Q;\bZ[\pi])=0$ with local coefficients; equivalently, the $\bZ/p$-cover is an integral $h$-cobordism. To find a locally linear extension, one needs to find a lens space $L(p,q)$ for some integer $q \pmod p$ and a $\bZ[\pi]$-homology equivalence $f\colon Q \rightarrow L(p,q)$ satisfying the conditions above. To do this, start with the classifying map of the cover $Q=\Sigma/\pi$, so a map $f\colon Q \rightarrow B\pi$. By general position arguments we can take the image to be a $3$-dimensional lens space $L(p;r,s)$ and arrange so that the map is of degree one \cite{E87}, thus giving a $\bZ[\pi]$-homology equivalence $f\colon Q \rightarrow L(p;r,s)$. When $\Sigma$ is a Seifert fibered space the following theorem gives the constraint on the lens space:

\begin{theorem}[{Kwasik-Lawson \cite[p.~35]{KL93}}]
Let $Q$ be a Seifert fibered space with Seifert invariants $\{(a_i,b_i)\}$ with $\alpha \sum 
{b_i}/{a_i} 
=p$ where $\alpha$ is the product of the $a_i$. Then there is a degree one map $f\colon Q\rightarrow L(p;r,s)$ which is a $\bZ[\pi]$-homology equivalence if and only if $\alpha \equiv rs \pmod p$.
\end{theorem}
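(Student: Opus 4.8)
The plan is to reduce the $\bZ[\pi]$-homology equivalence condition to a purely numerical congruence by exploiting the known torsion and rho invariants of lens spaces. Concretely, I would start with the degree one map $f\colon Q \to L(p;r,s)$ obtained (as described in the text) by taking the classifying map of the cover and pushing it by general position to a $3$-dimensional lens space. Such an $f$ is automatically a $\bZ[\pi]$-homology equivalence on the level of rational homology; the real content is to determine exactly which lens spaces $L(p;r,s)$ can arise this way, i.e.\ for which pairs $(r,s)$ there exists a degree one map $Q \to L(p;r,s)$ that is a $\bZ[\pi]$-homology equivalence.

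First I would compute the Reidemeister torsion $\Delta(Q)$ directly from the Seifert structure. Using the CW structure on $Q$ coming from the Seifert fibration and the natural ring maps $\bZ[\pi] \to \bZ[\zeta] \to \bQ[\zeta]$ with $t \mapsto \zeta = e^{2\pi i/p}$, the chain complex $C_*(\Sigma)\otimes\bQ[\zeta]$ is acyclic, and its torsion is a product over the exceptional fibers: up to units, $\Delta(Q) \sim \prod_i (\zeta^{c_i} - 1)$ where the exponents $c_i$ are determined modulo $p$ by the Seifert invariants $(a_i, b_i)$ and the projection formula relating the generator of $\pi$ to the regular fiber. I would then match this against $\Delta(L(p;r,s)) \sim (\zeta^r-1)(\zeta^s-1)$ modulo squares of cyclotomic units, which is exactly the Franz/Reidemeister classification of lens spaces up to the indeterminacy $u^2$. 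The key elementary fact is that $(\zeta^a - 1)/(\zeta^b - 1)$ is a unit in $\bZ[\zeta]$ whenever $a \equiv \pm b \pmod p$, and more refined congruences control when such a quotient is a square of a unit; this converts the torsion condition $\Delta(Q) \sim u^2 \Delta(L)$ into a condition on the products of the exponents. Tracking these products, and using the hypothesis $\alpha \sum b_i/a_i = p$ (which pins down the relationship between the $c_i$ and the $a_i$), the torsion matching reduces to $\alpha \equiv rs \pmod p$.

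Next I would verify the rho-invariant condition is automatically implied once the torsion condition holds. Using the Fourier transform formula \eqref{...} expressing $\rho(Q,\gamma_\ell)$ in terms of the equivariant eta invariants $\eta_t(\Sigma)$, and the product formula $\eta_t(S^3) = \prod (t^{c_i}+1)/(t^{c_i}-1)$ for the Seifert data, I would show $\eta_t(Q) = \eta_t(L(p;r,s))$ for all $t \neq 1$ precisely when the same exponent data matches modulo $p$ — i.e.\ the rho invariants depend on exactly the same congruence information as the torsion in this Seifert setting. Thus the two conditions of the previous theorem's part (2) collapse to the single congruence $\alpha \equiv rs \pmod p$, and by part (1) the locally linear extension exists if and only if this holds. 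Conversely, given $\alpha \equiv rs \pmod p$, the matching torsion and rho invariants produce the $\bZ[\pi]$ $h$-cobordism, completing the "if" direction.

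The main obstacle I expect is the torsion bookkeeping: correctly identifying the cyclotomic exponents $c_i$ attached to each exceptional fiber (which requires being careful about orientations, the choice of generator of $\pi\subset S^1$, and the congruences $a_1a_2a_3 b_i/a_i \equiv 1 \pmod{a_i}$ governing the Seifert data), and then showing the unit-square indeterminacy $u^2$ in $\bZ[\zeta]^\times$ is exactly absorbed by the ambiguity $(r,s) \mapsto (r', s')$ with $rs \equiv r's' \pmod p$ — this is essentially the classical fact that lens spaces $L(p;r,s)$ are classified up to $\bZ[\pi]$ $h$-cobordism by the residue $rs \bmod p$, which I would need to invoke or re-derive. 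Everything else — the general position construction of $f$, the rho-invariant computation, and the final assembly via the Edmonds/Kwasik--Lawson theorem — is then routine.
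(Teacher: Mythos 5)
The paper does not actually prove this statement; it is quoted directly from Kwasik--Lawson \cite{KL93}, so the comparison to make is with their argument. Measured against that, your proposal takes a wrong route: you are proving (or trying to prove) a different statement. The theorem at hand asserts only the existence of a \emph{degree one} map $f\colon Q\to L(p;r,s)$ that is a $\bZ[\pi]$-homology equivalence, and its content is elementary homological bookkeeping: compress the classifying map $Q\to B\pi$ to the $3$-skeleton $L(p;r,s)$, compute the degree of the resulting map modulo $p$ (via the linking forms, the value of $\lk_Q$ on the preferred generator of $H_1(Q)\cong\cy p$ is governed by $\alpha$, while for $L(p;r,s)$ it is governed by $rs$, so any map compatible with the classifying maps has degree $\equiv \alpha\cdot(rs)^{-1} \pmod p$), note that the degree can be changed by multiples of $p$ without changing the induced map on $\pi_1$, and finally observe that a degree one map compatible with the covers lifts to a degree one map $\Sigma\to S^3$ between integral homology spheres, which is automatically a homology equivalence --- hence $f$ is a $\bZ[\pi]$-homology equivalence exactly when degree one is achievable, i.e.\ when $\alpha\equiv rs \pmod p$. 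This degree/linking-form computation, which is the entire substance of the theorem, is absent from your proposal.

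Instead you route the argument through Reidemeister torsion and rho invariants. Those invariants belong to the \emph{other} theorems of this section: matching of $\rho$ and $\Delta(Q)\sim u^2\Delta(L)$ are the criteria for the $\bZ[\pi]$ $h$-cobordism (equivalently the locally linear extension), and the torsion condition is strictly stronger than the existence of a degree one $\bZ[\pi]$-homology equivalence. Indeed, the very next theorem in the paper shows that a \emph{simple} $\bZ[\pi]$-homology equivalence requires the additional congruences $\{a,b,c\}\equiv\{r,s,1\}\pmod p$ up to sign, not merely $abc\equiv rs\pmod p$; so your claim that the torsion matching ``reduces to $\alpha\equiv rs\pmod p$'' cannot be correct, and your further claim that the rho-invariant condition is automatically implied by the torsion condition is also false in general (if it were, the Edmonds/Kwasik--Lawson criterion would not need to list both, and the non-trivial eta-invariant computation in the proof of Theorem B of this paper would be unnecessary). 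In short: the proposal conflates the existence of a degree one $\bZ[\pi]$-homology equivalence with the $h$-cobordism criterion, and the missing idea is the mod $p$ degree computation for the compressed classifying map.
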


In the case when we have a simple homology equivalence the Reidemeister torsion condition is fulfilled.

\begin{theorem}[{Kwasik-Lawson \cite[p.~37]{KL93}}]
There is a simple $\bZ[\pi]$-homology equivalence between the rational homology sphere $Q=\Sigma(a,b,c)/\pi$
 and a lens space $ L(p;r,s)$, respecting the orientation and the preferred generators of $H_1(Q)$ and $H_1(L)$,   if and only if $\{a,b,c\}$ are congruent to $\{r,s,1\} \pmod p$ up to sign and  $abc \equiv rs \pmod p$.
\end{theorem}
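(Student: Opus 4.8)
The plan is to deduce the statement from the two preceding Kwasik--Lawson results together with the standard comparison of Whitehead and Reidemeister torsions, plus an explicit computation of $\Delta(Q)$.

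First I would isolate the ``existence'' part from the ``simplicity'' part. For $Q=\Sigma(a,b,c)/\pi$ the Seifert Euler number is $p$ times that of $\Sigma(a,b,c)$, so $\alpha\sum b_i/a_i=p$ with $\alpha=abc$, and the preceding theorem (Kwasik--Lawson, p.~35) says a degree-one $\bZ[\pi]$-homology equivalence $f\colon Q\to L(p;r,s)$ exists if and only if $abc\equiv rs\pmod p$. A $\bZ[\pi]$-homology equivalence between oriented rational homology $3$-spheres that preserves orientation and the chosen generators of $H_1$ automatically has degree $+1$, so $abc\equiv rs$ is necessary in all cases; it remains to decide, among the maps $f$ available when $abc\equiv rs$, when one can be chosen \emph{simple}.

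Second, I would reduce ``$f$ simple'' to a torsion identity. Simplicity means $\tau(f)=0$ in $\mathrm{Wh}(\bZ[\pi])$. Since $f$ is a $\bZ[\pi]$-homology equivalence, the twisted homology with $\bQ(\zeta)$-coefficients vanishes, and multiplicativity of torsion identifies the image of $\tau(f)$ in $K_1(\bQ[\pi])=\bQ^{\times}\times\bQ(\zeta)^{\times}$ with $\bigl(1,\,\Delta(Q)/\Delta(L(p;r,s))\bigr)$, the $\bQ^{\times}$-component being trivial because $f$ is a degree-one $\bQ$-homology equivalence. For $p$ prime, $\mathrm{Wh}(\bZ[\bZ/p])$ is free abelian and the standard computation (via $\bZ[\bZ/p]=\bZ\times\bZ[\zeta]$) embeds it into $\bQ(\zeta)^{\times}/\langle\pm\zeta\rangle$. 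Hence $\tau(f)=0$ if and only if $\Delta(Q)\sim\Delta(L(p;r,s))$, where ``$\sim$'' now means equality up to the trivial units $\pm\zeta^{k}$. (Here the hypothesis that $f$ respects the preferred generators removes the $(\bZ/p)^{\times}$-ambiguity in $(r,s)$ and makes $\Delta(L(p;r,s))$ meaningful.) On one side, $\Delta(L(p;r,s))\sim(\zeta^{r}-1)(\zeta^{s}-1)$ is recorded above. On the other side, since $Q$ is Seifert fibered over $S^{2}$ with exactly three exceptional fibers of orders $a,b,c$ and $H_1(Q)=\bZ/p$ generated by (the image of) the regular fiber, I would decompose $Q$ into $(\text{thrice-punctured }S^{2})\times S^{1}$ glued to three solid tori and apply multiplicativity of Reidemeister torsion (the gluing tori contributing trivially). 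Tracking the Seifert data of $Q$ through this decomposition, using the Brieskorn relations between that data and $a,b,c$ and the explicit restriction of the Seifert $S^{1}$-action, one obtains, with respect to the preferred generator,
\[
\Delta(Q)\ \sim\ (\zeta^{a}-1)(\zeta^{b}-1)(\zeta^{c}-1)(\zeta-1)^{-1}.
\]
Thus a simple equivalence exists iff $abc\equiv rs\pmod p$ and $(\zeta^{a}-1)(\zeta^{b}-1)(\zeta^{c}-1)\sim(\zeta-1)(\zeta^{r}-1)(\zeta^{s}-1)$.

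Finally I would extract the arithmetic. For $x\not\equiv 0\pmod p$ put $u_x=(\zeta^{x}-1)/(\zeta-1)\in\bZ[\zeta]^{\times}$; then $u_x\sim u_{-x}$ and $u_1=1$. By Kummer's theorem the cyclotomic units $u_2,\dots,u_{(p-1)/2}$ span a finite-index subgroup of $\bZ[\zeta]^{\times}$, which has rank $(p-3)/2$, so they are multiplicatively independent modulo $\langle\pm\zeta\rangle$. Dividing the displayed relation by $(\zeta-1)^{3}$ and comparing exponent vectors forces the multiset $\{a,b,c\}$ (reduced to $\{1,\dots,(p-1)/2\}$) to equal that of $\{1,r,s\}$, i.e.\ $\{a,b,c\}\equiv\{r,s,1\}\pmod p$ up to sign; conversely this congruence together with $abc\equiv rs$ yields the torsion identity (the congruence on the products pins down the sign left free by ``up to sign''). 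For $p=2,3$, where $\mathrm{Wh}(\bZ[\bZ/p])=0$ and there are no cyclotomic units, both conditions hold automatically once $abc\equiv rs$, consistently with the statement. This proves both implications.

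\textbf{Main obstacle.} The technical heart is the computation of $\Delta(Q)$ in exactly the normalized form $(\zeta^{a}-1)(\zeta^{b}-1)(\zeta^{c}-1)(\zeta-1)^{-1}$: one must verify that, after fixing the preferred generator of $H_1(Q)$, the three exceptional-fiber contributions are precisely $\zeta^{a},\zeta^{b},\zeta^{c}$ (rather than their inverses, or $\zeta^{bc},\zeta^{ca},\zeta^{ab}$), and that the single leftover factor is $(\zeta-1)^{-1}$. This is a delicate bookkeeping of the Seifert invariants of $\Sigma(a,b,c)/\pi$ and of the definition of the preferred generator; the residual ``up to sign and reordering'' ambiguity is then exactly what the congruence $abc\equiv rs$ removes.
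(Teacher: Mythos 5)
The paper itself does not prove this statement: it is quoted from Kwasik--Lawson \cite[p.~37]{KL93}, so the only ``paper proof'' is the citation, and your architecture --- reduce \emph{simple} to equality of Reidemeister torsions via the embedding $\mathrm{Wh}(\bZ/p)\hookrightarrow \bQ(\zeta)^{\times}/\langle\pm\zeta\rangle$, compute $\Delta(Q)$ from the Seifert structure of the quotient, and finish with Franz-type multiplicative independence of the cyclotomic units $u_x=(\zeta^x-1)/(\zeta-1)$ --- is indeed the Kwasik--Lawson route. The existence step (degree-one $\bZ[\pi]$-homology equivalence iff $abc\equiv rs \pmod p$, from the p.~35 theorem), the identification of the image of $\tau(f)$ with $\Delta(Q)/\Delta(L)$, and the independence argument are all sound, with the small caveat that you must also check that the map furnished by the p.~35 theorem can be chosen to respect the preferred generators of $H_1$, since the torsion comparison is only meaningful once the two characters are matched.

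There is, however, a genuine gap exactly where you flag it: the normalized formula $\Delta(Q)\sim(\zeta^{a}-1)(\zeta^{b}-1)(\zeta^{c}-1)(\zeta-1)^{-1}$ is asserted, not proved, and the entire content of the congruence $\{a,b,c\}\equiv\{r,s,1\}$ is equivalent to it. The danger is not hypothetical. If one normalizes by the deck-transformation generator (the natural choice coming from the free $\pi$-action inside the circle action), the exceptional fiber of multiplicity $a$ in $Q$ represents $t^{a^{-1}}$ in the deck group, and the same computation that yields the lens-space torsion produces factors $(\zeta^{a^{-1}}-1)(\zeta^{b^{-1}}-1)(\zeta^{c^{-1}}-1)$ on one side and $(\zeta^{r^{-1}}-1)(\zeta^{s^{-1}}-1)$ on the other; in that \emph{fully consistent} convention the resulting criterion happens to be equivalent to the stated one, but a mismatch of normalizations between the two sides, or an error in the single leftover ``fiber'' factor (for instance $(\zeta^{abc}-1)^{-1}$ in place of $(\zeta-1)^{-1}$, which careless bookkeeping of the quotient Seifert invariants can produce), changes the congruence in the conclusion. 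So the argument is not complete until you carry out the gluing/multiplicativity computation for $Q$ with the \emph{same} preferred-generator convention that makes $\Delta(L(p;r,s))\sim(\zeta^{r}-1)(\zeta^{s}-1)$, and verify the two normalizations agree; this is precisely the p.~36--37 computation in Kwasik--Lawson and it cannot be waved through.
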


We now use the above results to show an infinite family in the list of Stern \cite{S78} admits locally linear pseudo free extensions to a contractible $4$-manifold. First we will need the following 
\begin{lemma}\label{lem:signature}
For each positive integer $k$, each of the Brieskorn homology $3$-spheres $\Sigma(r,rs \pm 1,2r(rs \pm 1)+rs \pm 2)$ bounds an indefinite smooth $4$-manifold $X_0$ with signature equal to $-2$.
\end{lemma}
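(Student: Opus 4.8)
The plan is to realize $\Sigma = \Sigma(r, rs\pm 1, 2r(rs\pm 1)+rs\pm(\text{something}))$ as the boundary of a plumbing $4$-manifold and then modify it to reduce the Betti number while controlling the signature. Recall that a Brieskorn homology sphere $\Sigma(a_1,a_2,a_3)$ bounds the canonical negative definite resolution $M(\Gamma)$, whose intersection form is the (negative definite) plumbing matrix $Q_\Gamma$; in particular $\operatorname{sign}(M(\Gamma)) = -b_2(M(\Gamma))$. But $M(\Gamma)$ is definite, so to get signature exactly $-2$ we must cancel most of the form. The key observation is that the family in question is precisely Stern's family, obtained from the Casson--Harer family $\Sigma(r, rs\mp 1, rs\pm 2)$ (which bounds a contractible $W_0$) by a single Seifert-invariant modification. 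I would exploit this: surger or plumb $W_0$ together with a small definite or indefinite piece that accounts for the extra Seifert fiber.

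Concretely, first I would write down explicitly the Seifert invariants of $\Sigma(r, rs\pm 1, 2r(rs\pm 1)+rs\pm 2)$ and compare them with those of the Casson--Harer sphere that bounds a contractible $4$-manifold $W_0$; the third Seifert pair differs by a controlled amount. Second, I would build $X_0$ as $W_0$ with a single $2$-handle (or a single $(-1)$- or $(+1)$-framed handle, depending on sign conventions) attached along a suitable knot, or equivalently as a plumbing on a graph with one negative node of weight chosen to make the result indefinite, glued to $-W_0$ along a common lens-space or Brieskorn-sphere piece via a known diffeomorphism of their boundaries. Third, I would compute $\operatorname{sign}(X_0)$ directly from the handle description: since $W_0$ is contractible it contributes $0$, and the added handles contribute a rank-$2$ block of signature $-2$ (for instance a hyperbolic piece plus a $(-1)$, or two $(-1)$'s with an appropriate off-diagonal term); indefiniteness is then automatic from having both signs present or from $b_2 > |\operatorname{sign}|$.

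The main obstacle will be the bookkeeping in matching the boundaries: one must verify that after the handle attachment the boundary is genuinely the asserted Brieskorn sphere $\Sigma(r, rs\pm 1, 2r(rs\pm 1)+rs\pm 2)$, which requires a careful Rolfsen-calculus or Neumann plumbing-calculus computation on the surgery diagram, using the continued-fraction identities for the weights $t_{ij}$ and the central weight $\delta$. A cleaner alternative, which I would pursue in parallel, is purely algebraic: take $X_0$ to be the canonical negative definite resolution $M(\Gamma)$ and then perform $b_2(M(\Gamma)) - 2$ internal $(-1)$-blowdowns after first blowing up to create $(-1)$-spheres; but $M(\Gamma)$ is minimal (no $(-1)$-blowdowns), so instead I would glue $M(\Gamma)$ to a second plumbing realizing the ``other side'' and use Donaldson-type diagonalization only as a consistency check on the signature. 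In any case the signature is pinned down by additivity (Novikov) and the contractibility of the auxiliary pieces, so once the boundary identification is in hand the value $-2$ and the indefiniteness follow immediately; I expect the boundary identification to be the only real work.
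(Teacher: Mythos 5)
There is a genuine gap: you never actually produce the $4$-manifold $X_0$. Your plan defers precisely the step that constitutes the entire content of the lemma --- exhibiting a specific $4$-manifold whose boundary is $\Sigma(r,rs\pm 1,2r(rs\pm 1)+rs\pm 2)$ and whose intersection form has signature $-2$ --- and you acknowledge this yourself (``I expect the boundary identification to be the only real work''). The paper's proof is short exactly because it does not need to improvise this: it quotes an explicit indefinite plumbing graph due to Fickle (a star-shaped-with-extra-branch graph with weights $(r-1)/2,\,-2,\,-1,\,-2,\,(r-1)/2,\,\mp s,\,-r,\,2$ and a branch $-r,\,\pm s$) whose boundary is known to be this Brieskorn sphere, and then computes the signature to be $-2$ by the Eisenbud--Neumann algorithm, a graph version of Gaussian diagonalization over $\bQ$. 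Your first route (attach one or two $2$-handles to the Casson--Harer contractible $W_0$ along a suitable knot) is in the right spirit --- this is essentially how Fickle/Stern-type examples arise --- but without specifying the knot and framings and verifying the resulting boundary by Kirby or plumbing calculus, nothing has been proved.

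Two further points in your sketch would not survive scrutiny. The ``cleaner alternative'' is incoherent: blowing up and then blowing down the $(-1)$-spheres you just created changes nothing, and ``gluing $M(\Gamma)$ to a second plumbing realizing the other side'' produces a closed manifold, not a manifold with boundary $\Sigma$; moreover the minimality of $M(\Gamma)$, which you note, blocks the only version of this that could help. Second, the closing claim that ``the signature is pinned down by additivity (Novikov) and the contractibility of the auxiliary pieces, so once the boundary identification is in hand the value $-2$ \dots follow[s] immediately'' is misleading: the signature of a $4$-manifold is not determined by its boundary, so it cannot ``follow'' from the boundary identification --- it must be computed from the explicit handle or plumbing description, which is exactly what your proposal omits and what the paper does via the Eisenbud--Neumann computation on Fickle's graph.
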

\begin{proof}
We can realize $\Sigma(r,rs \pm 1,2r(rs \pm 1)+rs \pm 2)$ as the boundary of the following plumbed indefinite $4$-manifold $X_0$ (see Fickle \cite{Fickle}):
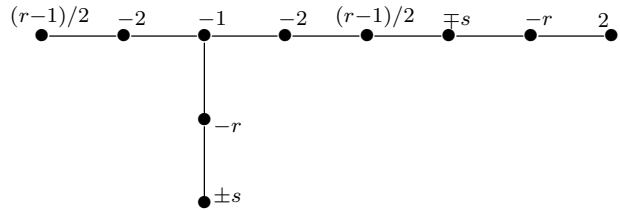
\begin{figure}[ht!]
\begin{displaymath}
\xymatrix{
  & *{\bullet} \ar@{-}[r]^<{(r-1)/2} & *{\bullet} \ar@{-}[r]^<{-2} & *{\bullet} \ar@{-}[d] \ar@{-}[r]^<{-1} & *{\bullet} \ar@{-}[r]^<{-2} & *{\bullet} \ar@{-}[r]^<{(r-1)/2} & *{\bullet} \ar@{-}[r]^<{\mp s} & *{\bullet} \ar@{-}[r]^<{-r} ^>{2} & *{\bullet} \\
                           & &             & *{\bullet}  \ar@{-}[d]^>{ \pm s} ^<{-r} &&&& \\
                           & &            & *{\bullet}  &&&& \\
}
\end{displaymath}
\caption{The boundary of this plumbed $4$-manifold is the homology $3$-sphere $\Sigma(r,rs \pm 1,2r(rs \pm 1)+rs \pm 2)$.}
\label{indefinite plumbing graph}
\end{figure}
The signature is determined via  an algorithm which amounts to a graph version of the Gaussian diagonalization process over the rationals (see \cite[p.~153]{EN85}).
\end{proof}

\begin{theorem}\label{thm:B}
Let $r$ be odd, and let $p$ be an integer relatively prime to $2r(r+1)$. Then for each positive integer $k$, the standard free action of $\pi=\bZ/p$  on $\Sigma(r,rkp \pm 1,2r(rkp \pm 1)+rkp \pm 2)$ extends to locally linear action on a smooth contractible $4$-manifold $W$ with a single fixed point of rotation number $(r,2r+2)$.   
\end{theorem}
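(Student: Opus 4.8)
\textbf{Plan for the proof of Theorem \ref{thm:B}.}

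The strategy is to verify the numerical conditions of the Kwasik--Lawson criterion (part (1) of the Edmonds/Kwasik--Lawson theorem combined with the two subsequent theorems) for the quotient $Q = \Sigma(r, rkp\pm 1, 2r(rkp\pm 1) + rkp \pm 2)/\pi$. Writing $s = kp$ and $\Sigma = \Sigma(a,b,c)$ with $a = r$, $b = rs\pm 1$, $c = 2r(rs\pm 1) + rs \pm 2$, I would first record the Seifert invariants $\{(a_i, b_i)\}$ of $\Sigma$ and of $Q$ (these are obtained from the resolution-graph data via equations (4.1)--(4.2)), and compute $\alpha = abc$ together with $\alpha \sum b_i/a_i$; one checks that this last quantity equals $p$ (this is exactly where the hypothesis $s = kp$ enters, ensuring the $\pi$-action is the one contained in the circle action and that the orbit data is compatible with a lens-space target). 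Then, by the second Kwasik--Lawson theorem, a degree-one $\bZ[\pi]$-homology equivalence $f\colon Q \to L(p; r', s')$ exists iff $\alpha \equiv r's' \pmod p$, and by the third theorem the Reidemeister torsion condition is automatic (the equivalence is simple) provided $\{a, b, c\} \equiv \{r', s', 1\} \pmod p$ up to sign with $abc \equiv r's' \pmod p$.

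Next I would identify the target lens space explicitly. Reducing $\bmod\, p$: since $s = kp \equiv 0$, we get $b = rs \pm 1 \equiv \pm 1$ and $c = 2r(rs\pm1) + rs \pm 2 \equiv \pm 2r + 0 \pm 2 = \pm(2r+2) = \pm 2(r+1) \pmod p$. Thus $\{a, b, c\} \equiv \{r, \pm 1, \pm 2(r+1)\} \equiv \{\pm 1, r, 2(r+1)\} \pmod p$ up to sign, so the natural candidate is $L(p; r, 2r+2)$, i.e.\ $r' = r$, $s' = 2r+2 = 2(r+1)$ --- which also matches the claimed rotation number $(r, 2r+2)$ at the fixed point. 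It then remains to verify $abc \equiv r's' = r\cdot 2(r+1) = 2r(r+1) \pmod p$: indeed $abc = r(rs\pm1)\big(2r(rs\pm1)+rs\pm2\big) \equiv r(\pm1)(\pm(2r+2)) = 2r(r+1) \pmod p$, using $\pm1 \cdot \pm 1 = +1$ consistently since both signs are the same $\pm$. This congruence holds as an identity mod $p$ with no further constraint, but the hypothesis $\gcd(p, 2r(r+1)) = 1$ guarantees that $r's' = 2r(r+1)$ is a \emph{unit} mod $p$, so that $L(p; r, 2r+2)$ is a genuine lens space with $\pi_1 = \bZ/p$ and the standard action on $\Sigma$ (free iff $p \nmid abc$, which follows since $abc \equiv 2r(r+1) \not\equiv 0$) is the correct one; it also ensures $r$ itself is a unit mod $p$.

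Finally, I would assemble these facts: the three Kwasik--Lawson theorems give a $\bZ[\pi]$ $h$-cobordism from $Q$ to $L(p; r, 2r+2)$ (the rho-invariant equality being subsumed in the "simple $\bZ[\pi]$-homology equivalence respecting orientations and preferred generators" statement of the third theorem, which forces the rho invariants to agree), hence by part (1) of the Edmonds/Kwasik--Lawson theorem the standard free $\pi$-action on $\Sigma$ extends to a locally linear action on a contractible $4$-manifold $W'$ with one fixed point whose local rotation data is that of the cone on $L(p; r, 2r+2)$, namely $(r, 2r+2)$. To see that $W'$ may be taken to be \emph{smooth}, I would invoke Lemma \ref{lem:signature}: $\Sigma$ bounds the smooth plumbed $4$-manifold $X_0$ of Fickle with signature $-2$, and --- as in the Casson--Harer/Stern constructions --- one blows down or surgers $X_0$ (it is indefinite, signature $-2$, with the right homology) to produce a smooth contractible $W$ with $\partial W = \Sigma$; the locally linear $\pi$-action, being unique up to the $h$-cobordism data, transports onto this smooth $W$. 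The main obstacle I anticipate is precisely this last point --- reconciling the abstract locally linear extension produced by the Edmonds/Kwasik--Lawson surgery-theoretic machinery with a \emph{smooth} contractible manifold from the Stern/Fickle list, i.e.\ checking that the smooth contractible filling and the $h$-cobordism to the lens space can be chosen compatibly so the action is locally linear on the smooth $W$; the number-theoretic verifications above are routine once the reductions mod $p$ are written out.
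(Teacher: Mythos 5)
There is a genuine gap, and it sits exactly where the paper's proof does its real work. You treat the equality of rho invariants as automatic, "subsumed in the simple $\bZ[\pi]$-homology equivalence" of the third Kwasik--Lawson theorem. That is false: in the Edmonds/Kwasik--Lawson criterion the $\bZ[\pi]$ $h$-cobordism to a lens space requires \emph{two} independent conditions along the homology equivalence $f\colon Q \to L(p;r,2r+2)$ --- the Reidemeister torsion condition (which is what the simple homology equivalence settles) \emph{and} the equality of rho invariants. Rho invariants are spectral invariants, not determined by the $\bZ[\pi]$-homology type; indeed their possible failure to match is precisely what obstructs locally linear extensions in other cases. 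The paper's proof of Theorem B is devoted almost entirely to this missing step: it computes $\eta_t(\Sigma)$ via the $G$-signature theorem for manifolds with boundary, applied to the equivariant plumbing (in the sense of Fintushel) on the indefinite Fickle graph of Lemma \ref{lem:signature}. The $\bZ/p$-action there is contained in the circle action, hence homologically trivial, so $\Sign$ can be used in place of the equivariant signature; the rotation numbers produced by the plumbing occur in cancelling pairs, and after cancellation the defect formula with $\Sign(X_0)=-2$ collapses to $\eta_t(S^3)$ for rotation numbers $(r,2r+2)$, i.e.\ to the eta invariant of $L(p;r,2r+2)$. Without this computation (or some substitute), your argument does not establish the $h$-cobordism hypothesis, so the extension theorem cannot be invoked.

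Two smaller points. First, you misassign the role of Lemma \ref{lem:signature}: in the paper it exists to feed $\Sign(X_0)=-2$ into the $G$-signature computation, not to manufacture the smooth contractible filling; the smooth contractible $W$ is given by Stern's construction for this family, and your suggestion to "blow down or surger" the signature $-2$ manifold $X_0$ into a contractible one is unsupported. Second, the worry you flag at the end about transporting the locally linear action onto the smooth $W$ is not the hard point: any two contractible $4$-manifolds bounding $\Sigma$ are homeomorphic rel boundary by Freedman, so once the Edmonds/Kwasik--Lawson machinery produces a locally linear extension with one fixed point, it lives on the smooth $W$ from Stern's list. Your mod $p$ identification of the target lens space $L(p;r,2r+2)$ and the congruence $abc \equiv 2r(r+1) \pmod p$ do agree with the paper and correctly explain the rotation number $(r,2r+2)$, but they are the routine part of the argument.
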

\begin{proof}
There is a simple $\bZ[\pi]$-homology equivalence from the quotient
$$Q=\Sigma(r,rkp \pm 1,2r(rkp \pm 1)+rkp \pm 2)/\pi$$
 to the classical lens space $L(p;r,2r+2)$. We need to show that these have equivalent rho invariants and we do this by showing that their equivariant eta invariants are equal. 
Equivariant plumbing (see Fintushel \cite[\S 4]{Fintushel:1977}) on the graph in Figure \ref{indefinite plumbing graph} simplifies the computation: we will see  that it produces cancelling pairs of rotation numbers . 

For each integer $a$, let $D^2(a)$ denote the unit disk in $\bC$ with $S^1$-action given by $z \mapsto z^a$.  Given relatively prime integers $a$ and $b$, we have a circle action  on $D^2(a) \times D^2(b)$ given by the formula
\eqncount
\begin{equation}\label{eq:plumbing}
z \cdot (re^{i\theta},se^{i\tau})=(re^{i(\theta+at)},se^{i(\tau+bt)}),
\end{equation} 
where $z = e^{it} \in S^1$. 
 Write $S^2=D^2_+ \cup D^2_-$ as the upper and lower hemispheres and consider the trivial $D^2$-bundle over each hemisphere.
The formula  in \eqref{eq:plumbing}
defines an $S^1$-action on the trivial bundle $D^2_+ \times D^2$, and similarly for the lower hemisphere with $a$ and $b$ replaced with $c$ and $d$. 
We  glue these trivial equivariant bundles together using 
the map 
$$F\colon\partial D^2_+ \times D^2 \rightarrow \partial D^2_{-} \times D^2$$
 defined by $F(e^{i\theta},se^{i\tau})=(e^{-i\theta},se^{i(-k\theta+\tau)}).$
We obtain an $S^1$-equivariant $D^2$-bundle $E_k$ over $S^2 = D^2_+(a) \cup D^2_-(-a)$ with Euler number $k$,
provided that
\eqncount\begin{equation}
\begin{pmatrix}
c\\
d\\
\end{pmatrix}
=
\begin{pmatrix}
-1 & 0 \\
-k & 1 \\
\end{pmatrix}
\begin{pmatrix}
a\\
b\\
\end{pmatrix}.
\end{equation}
To equivariantly plumb with another such $D^2$ bundle we identify over a trivialized hemisphere  by interchanging base and fibre coordinates.  

The extended $\bZ/p$-action is part of the circle action and is therefore isotopic to the identity (hence homologically trivial). We can thus identify the equivariant signature of the manifold $X_0$  with its usual signature: $\sign (X_0) = -2$ (see Lemma \ref{lem:signature}). The rotation numbers arising from equivariant plumbing on the graph in Figure \ref{indefinite plumbing graph} are 
$$(2,r),(2,r), (-1,2), (-1,2), (r,-2), (r,-2), (-1,r), (1,r), (r, 2r+2)$$
and one fixed $2$-sphere with self-intersection $-1$ with rotation number $1$ acting on the normal fiber. The Euler characteristic of the fixed set $\chi(\Fix(X_0))=11$ and signature equal to $-2$. After removing the cancelling pairs the $G$-signature theorem for manifolds with boundary simplifies to
\eqncount\begin{equation}
\eta_t(\Sigma)=-2 \big(\frac{t+1}{t-1}\big) \big(\frac{t^2+1}{t^2-1}\big)+\dfrac{4t}{(t-1)^2}-\Sign(X_0)+\big(\frac{t^r+1}{t^r-1}\big) \big(\frac{t^{2r+2}+1}{t^{2r+2}-1}\big).
\end{equation}
It is easy to check that the first three terms above cancel leaving the equivariant eta invariant of the classical lens space $L(p;r,2r+2)$ as was to be shown. 
\end{proof}

\section{An Infinite Family of Examples}
\label{sec:six}

In this section we give an infinite family of examples of non-smoothable locally linear extensions.  

\begin{example}
The Brieskorn homology $3$-sphere $\Sigma=\Sigma(3,16,113)$  bounds a smooth contractible $4$-manifold $W$, and admits free $\pi=\bZ/5$-action. It is part of the infinite family of the form $\Sigma(r,rs+1,2r(rs+1)+rs+2)$ given by Stern's examples with $r=3$ and $s=5$. It follows from Theorem B that the standard $\bZ/5$-action on $\Sigma(3,16,113)$ extends to a locally linear action on $W$ with one fixed point whose rotation data is $(3,3)$. However,  Theorem A shows that there is no such smooth action. It follows that $\Sigma(3,16,113)$ admits a $\bZ/5$-equivariant embedding into a homotopy $4$-sphere with a locally linear $\cy 5$-action.

\begin{figure}[ht!]
\centering
\includegraphics[scale=.4]{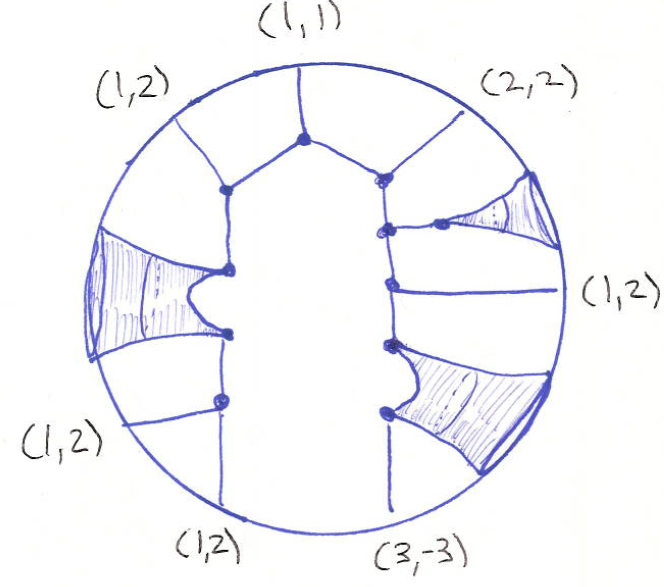}            
\caption[Moduli Space]{The fixed set pattern in the moduli space $(\cM(X),\pi)$ for $\Sigma(3,16,113)$.
 Each vertex in the interior is a reducible connection whose link is a complex projective space with a linear $\pi$-action. The isotropy representations then resemble that of an equivariant connected sum of linear actions on complex projective spaces.}\label{fig:two}
\end{figure}

The associated negative definite smooth $4$-manifold $M(\Gamma)$ has signature $-11$. 
\begin{figure}[ht!]
\begin{displaymath}
\Gamma = \vcenter{\xymatrix{
  & *{\bullet} \ar@{-}[r]^<{-3} & *{\bullet} \ar@{-}[d] \ar@{-}[r]^<{-1} & *{\bullet} \ar@{-}[r]^<{-4} & *{\bullet} \ar@{-}[r]^<{-2} & *{\bullet} \ar@{-}[r]^<{-2} & *{\bullet} \ar@{-}[r]^<{-2} ^>{-2} & *{\bullet} \\
                           &              & *{\bullet}  \ar@{-}[d]^>{-6} ^<{-3} &&&& \\
                           &              & *{\bullet}  \ar@{-}[d]^>{-4} &&&& \\
                           &              & *{\bullet}  \ar@{-}[d]^>{-2} &&&& \\
                           &              & *{\bullet}
}}
\end{displaymath}
\caption{The canonical negative definite plumbing diagram for $\Sigma(3,16,113).$}
\end{figure}
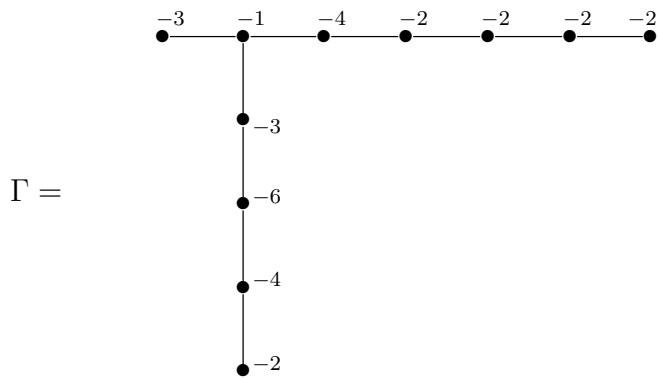
Equivariant plumbing beginning with the central vertex produces $6$ fixed points with rotation data $\{ (1,1),(1,2),(1,2),(1,2),(1,2),(2,2)\}$ and $3$ fixed $2$-spheres $F_1,F_2,F_3$, two of which represent homology classes of self-intersection $-2$ with normal rotation number $c_F=3$ and one representing a homology class (center vertex) of self-intersection $-1$ with normal rotation $c_F=1$. For the locally linear action on  $X =M(\Gamma) \cup_{\Sigma(3,16,113)} -W$, we have one additional fixed point with rotation data $(3,-3)$ coming from $-W$.

The intersection form $Q_X$ is given by
\eqncount\begin{equation}
Q_X=
\begin{pmatrix}
-1 & \xa & \xa & \xb   & \xb & \xb & \xa & \xb& \xb & \xb & \xb &\\
\xa & -3 & \xb & \xb   & \xb & \xb & \xb & \xb& \xb & \xb & \xb &\\
\xa & \xb & -3 & \xa   & \xb & \xb & \xb & \xb& \xb & \xb & \xb &\\
\xb & \xb & \xa & -6 & \xa & \xb & \xb & \xb& \xb & \xb & \xb &\\
\xb & \xb & \xb & \xa    & -4 & \xa & \xb & \xb& \xb & \xb & \xb &\\
\xb & \xb & \xb & \xb    & \xa & -2 & \xa & \xb& \xb & \xb & \xb &\\
\xa & \xb & \xb & \xb    & \xb & \xa & -4 & \xb& \xb & \xb & \xb &\\
\xb & \xb & \xb & \xb    & \xb & \xb & \xb & -2& \xa & \xb & \xb &\\
\xb & \xb & \xb & \xb    & \xb & \xb & \xb & \xa& -2 & \xa & \xb &\\
\xb & \xb & \xb & \xb    & \xb & \xb & \xb & \xb& \xa & -2 & \xa &\\
\xb & \xb & \xb & \xb    & \xb & \xb & \xb & \xb& \xb & \xa & -2 &\\
\end{pmatrix}
\end{equation}
and by Donaldson's Theorem A there exists an invertible integer matrix $C$ such that $C^t Q C=-I$, then the change of basis matrix $C^{-1}$ taking the basis in the plumbing diagram to a diagonal basis $\{e_i\}$ can be computed to be
\eqncount
\begin{equation}
C^{-1}=
\begin{pmatrix}
\xa & -1 & -1 & \xb  & \xb  & \xb   & -1  & \xb & \xb  & \xb  & \xb \\
\xb & -1  & \xa  & \xb & -1  & \xb   & \xb  & \xb  & \xb & \xb  & \xb \\
\xb & \xa  & \xb  & \xb  & -1 & \xb   & -1  & \xb  & \xb & \xb  & \xb \\
\xb & \xb  & \xa  & -1  & \xa  & \xb  & -1  & \xb  & \xb & \xb  & \xb \\
\xb & \xb  & \xb  & \xa  & \xb  & \xb   & \xb & \xb  & \xb & \xb  & -1 \\
\xb & \xb  & \xb  & \xb  & \xa  & -1   & \xb  & \xb  & \xb & \xb  & \xb \\
\xb & \xb  & \xb & \xb  & \xb  & -1   & \xb  & \xb  & \xb  & \xb & \xb \\
\xb & \xb  & \xb  & -1  & \xb  & \xb   & \xa  & -1 & \xb  & \xb  & \xb \\
\xb & \xb  & \xb  & -1  & \xb  & \xb   & \xb  & \xa  & -1  & \xb  & \xb \\
\xb & \xb  & \xb  & -1  & \xb  & \xb   & \xb  & \xb  & \xa  & -1  & \xb \\
\xb & \xb  & \xb  & -1  & \xb  & \xb   & \xb  & \xb  & \xb  & \xa  & -1 \\
\end{pmatrix}
\end{equation}
The fixed $2$-spheres are given in terms of a  diagonal basis as the first, sixth and tenth columns:
\begin{align*}
&F_1=e_1 \\
&F_6=-e_6-e_7 \\
&F_{10}=-e_{5}-e_{11}.
\end{align*}
The rest of the columns give expressions for the invariant $2$-spheres in terms of the diagonal basis. Now any other such matrix is obtained from $C$ by either permutations of the standard basis $\{e_i \}$ or a change of sign $(e_i \mapsto -e_i)$ since these are the automorphisms of the standard diagonal form. As we have seen, this information contradicts the existence of the general position equivariant moduli space  $\bMX$. Note that Figure \ref{fig:two} shows that this action is not ruled out just by the rotation numbers and the singular set in the moduli space.
\end{example}

\begin{example}
Before finding the general argument presented above, we worked out a particular infinite family of examples. Here is a way to simultaneously diagonalize all their intersection forms.
Let $(M_k,\pi)$ denote the canonical negative definite resolution of 
$$\Sigma_k=\Sigma(3, 3kp+1, 6(3kp+1)+3kp+2)$$ together with an action of  $\pi=\bZ/p$, for $p$ relatively prime to $6$,  extending the standard free $\pi$-action on $\Sigma_k$ via equivariant plumbing. 
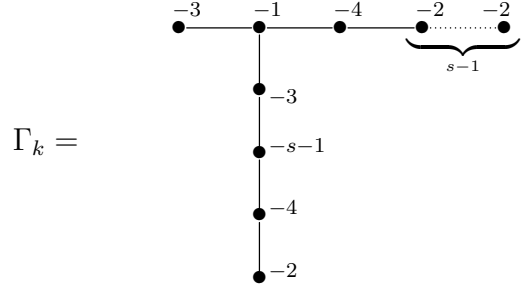
\begin{figure}[ht!]
\begin{displaymath}
\Gamma_k = \vcenter{\xymatrix@R-8pt{
  & *{\bullet} \ar@{-}[r]^<{-3} & *{\bullet} \ar@{-}[d] \ar@{-}[r]^<{-1} & *{\bullet} \ar@{-}[r]^<{-4} & *{\bullet} \ar@{.}[r]_{\underbrace{\hspace{1.5cm}}_{s-1}}^<{-2}^>{-2}  &  *{\bullet} \\
                           &              & *{\bullet}  \ar@{-}[d]^>{-s-1} ^<{-3} &&&& \\
                           &              & *{\bullet}  \ar@{-}[d]^>{-4} &&&& \\
                           &              & *{\bullet}  \ar@{-}[d]^>{-2} &&&& \\
                           &              & *{\bullet}
}}
\end{displaymath}
\caption{The canonical negative definite resolution plumbing diagram for $\Sigma(3,3s+1,6(3s+1)+3s+2)$, where $k =sp$.}
\end{figure}
If the action also extends to a smooth action on a contractible $4$-manifold $W$ then $X_k=M_k \cup -W$ is a simply connected, negative definite $4$-manifold with a smooth, homologically trivial $\pi$-action. The intersection form of $X_k$ is given by the  symmetric matrix indexed by $k$ (of size depending on $s= kp$):
$$
Q_{X_k}=
\begin{pmatrix}
-1 & \xa & \xa & \xb   & \xb & \xb & \xa & \xb& \xb & \xb & \xb & \xb&\cdots\\
\xa & -3 & \xb & \xb   & \xb & \xb & \xb & \xb& \xb & \xb & \xb & \xb&\cdots\\
\xa & \xb & -3 & \xa   & \xb & \xb & \xb & \xb& \xb & \xb & \xb & \xb&\cdots\\
\xb & \xb & \xa & -s-1 & \xa & \xb & \xb & \xb& \xb & \xb & \xb & \xb&\cdots\\
\xb & \xb & \xb & \xa    & -4 & \xa & \xb & \xb& \xb & \xb & \xb & \xb&\cdots\\
\xb & \xb & \xb & \xb    & \xa & -2 & \xa & \xb& \xb & \xb & \xb & \xb&\cdots\\
\xa & \xb & \xb & \xb    & \xb & \xa & -4 & \xb& \xb & \xb & \xb & \xb&\cdots\\
\xb & \xb & \xb & \xb    & \xb & \xb & \xb & -2& \xa & \xb & \xb & \xb&\cdots\\
\xb & \xb & \xb & \xb    & \xb & \xb & \xb & \xa& -2 & \xa & \xb & \xb&\cdots\\
\xb & \xb & \xb & \xb    & \xb & \xb & \xb & \xb& \xa & -2 & \xa & \xb&\cdots\\
\xb & \xb & \xb & \xb    & \xb & \xb & \xb & \xb& \xb & \xa & -2 & \xa&\cdots\\
\xb & \xb & \xb & \xb    & \xb & \xb & \xb & \xb& \xb &\xb  & \xa  &-2&\cdots\\
\xc&   \xc&   \xc&     \xc &  \xc&   \xc&   \xc&  \xc&  \xc &  \xc & \xc   &  \xc& \ddots
\end{pmatrix}
$$
We now claim that the matrix $C^{-1}$ in $C^t Q_{X_k} C=-I$ is given  in terms of a diagonal basis by the following expressions. Those which do not depend on the parameter are given by: 
\begin{align*}
&F_1=e_1, \quad F_2=-e_1-e_2+e_3, \quad F_3=-e_1+e_2+e_4,  \quad F_5=-e_2-e_3+e_4+e_6, \\
  &F_6=-e_6-e_7,\quad F_7=-e_1-e_3-e_4+e_8, \quad F_8=-e_8+e_9
\end{align*}
and the rest of the basis is obtained inductively by:
\begin{eqnarray*}
F_4=-e_4+e_5-e_8-e_9-e_{10} \cdots -e_{n}, \quad
F_n=-e_5-e_n  \quad F_{n-1}=-e_{n-1}+e_n
\end{eqnarray*}
where $n=6+s$, with $s\geq 3$. Once again, the point is that there is no consistent choice of sign in the expression of all the $[F_i]$, and moreover one cannot achieve such consistency by an automorphism of the standard form. Thus the actions in Theorem B for $r=3$ and $s=5$ do not extend smoothly.
\end{example}
\begin{proof}[The proof of Corollary C]
If $\Sigma=\Sigma(a,b,c)$ is a Brieskorn homology $3$-sphere which bounds a smooth contractible $4$-manifold $W$, then the manifold 
$N= W\cup_\Sigma (-W)$ is a smooth homotopy $4$-sphere in which $\Sigma(a,b,c)$ is a smoothly embedded submanifold. Now the examples of Theorem B provide a locally linear extension of the free $\pi=\cy p$-actions to $N$ with two isolated fixed points. \ Conversely, suppose that $(N, \pi)$ is a smooth $\pi$-action on a homotopy $4$-sphere. Then if $(\Sigma, \pi)$ embeds smoothly and equivariantly into $N$, it follows that the action on $N$ has two isolated fixed points, and that $N = W \cup_\Sigma W'$ is a smooth equivariant decomposition of $N$ as the union of compact $4$-manifolds $W$ and $W'$ with boundary $\Sigma$. By the van Kampen Theorem, the image of $\pi_1(\Sigma)$ normally generates $\pi_1(W)$, so we obtain a contradiction by Theorem \ref{thm:A}.
\end{proof}

\providecommand{\bysame}{\leavevmode\hbox to3em{\hrulefill}\thinspace}
\providecommand{\MR}{\relax\ifhmode\unskip\space\fi MR }
\providecommand{\MRhref}[2]{%
  \href{http://www.ams.org/mathscinet-getitem?mr=#1}{#2}
}
\providecommand{\href}[2]{#2}

\end{document}